\theoremstyle{plain}
\newtheorem{proposition}{Proposition}[section]
\newtheorem{defi}[proposition]{Definition}
\newtheorem{lemma}[proposition]{Lemma}
\newtheorem{thm}[proposition]{Theorem}
\newtheorem{corollary}[proposition]{Corollary}
\newtheorem{rmk}[proposition]{Remark}
\newtheorem{ex}[proposition]{Example}
\newcommand{\tn}[0]{\otimes}
\newcommand{\ct}[1]{\mathcal{#1}}
\newcommand{\ov}[1]{\overline{#1}}
\newcommand{\un}[0]{\mathtt{1}}
\newcommand{\und}[1]{\underline{#1}}
\newcommand{\SL}[0]{\mathrm{SupLat}}
\newcommand{\op}[0]{\mathrm{op}}
\newcommand{\ev}[0]{\mathrm{ev}}
\newcommand{\cv}[0]{\mathrm{coev}}
\newcommand{\id}[0]{\mathrm{id}}
\newcommand{\Rr}[0]{\mathcal{R}}
\newcommand{\lb}[2]{\lbrace (#1,#2) \rbrace}
\newcommand{\Ll}[0]{\mathcal{L}}
\newcommand{\st}[1]{\lbrace #1 \rbrace}
\newcommand{\Pl}[1]{\mathcal{P}(#1)}
\begin{document}

\title{Skew Braces as Remnants of Co-quasitriangular Hopf Algebras in $\SL$}
\author{Aryan Ghobadi \\ \small{Queen Mary University of London }\\\small{ School of Mathematics, Mile End Road}\\\small{ London E1 4NS, UK }\\ \small{Email: a.ghobadi@qmul.ac.uk}}
\date{}

\maketitle
\begin{abstract}
Skew braces have recently attracted attention as a method to study set-theoretical solutions of the Yang-Baxter equation. Here, we present a new approach to these solutions by studying Hopf algebras in the category, $\SL$, of complete lattices and join-preserving morphisms. We connect the two methods by showing that any Hopf algebra, $\ct{H}$ in $\SL$, has a corresponding group, $R(\ct{H})$, which we call its remnant and a co-quasitriangular structure on $\ct{H}$ induces a YBE solution on $R(\ct{H})$, which is compatible with its group structure. Conversely, any group with a compatible YBE solution can be realised in this way. Additionally, it is well-known that any such group has an induced secondary group structure, making it a skew left brace. By realising the group as the remnant of a co-quasitriangular Hopf algebra, $\ct{H}$, this secondary group structure appears as the projection of the transmutation of $\ct{H}$. Finally, for any YBE solution, we obtain a FRT-type Hopf algebra in $\SL$, whose remnant recovers the universal skew brace of the solution.
\end{abstract}

\begin{footnotesize}2020\textit{ Mathematics Subject Classification}: 16T25, 18M15, 16T99, 17B37
\\\textit{Keywords}: braided monoidal category,  complete lattice, Hopf algebra, skew braces, transmutation, Yang-Baxter equation \end{footnotesize}
\section{Introduction}\label{SInt}
Originally appearing in statistical mechanics \cite{yang1967some}, the Yang-Baxter equation and its solutions play a fundamental role in the theory of quantum groups, braided categories and knot theory. One of the simplest realisations of this equation is over sets: we call a set $X$ and a map $r:X\times X\rightarrow X \times X$, a \emph{set-theoretical solution to the Yang-Baxter equation} (YBE) if 
\begin{equation}\label{EYBE}
r_{12} r_{13} r_{23}= r_{23} r_{13} r_{12}
\end{equation}
holds, where $r_{ij}: X\times  X\times X\rightarrow X\times X\times X$ are the applications of $r$ to the $i$ and $j$-th components of $X^{3}$ i.e. $r_{23}= \mathrm{id}\times r$.  
In \cite{drinfeld1992some}, Drinfeld proposed the classification of such solutions as an open problem. Thenceforth, these objects have garnered large interest due their interactions with combinatorics \cite{gateva2004combinatorial}, ring theory \cite{smoktunowicz2018skew,smoktunowicz2018engel} and their applications to knot theory \cite{carter2004homology}. More recently, the work of Rump on involutive YBE solutions \cite{rump2007braces,rump2006modules} inspired Guarnieri and Vendramin to develop of the theory of \emph{skew braces}, which are sets with two compatible group structures \cite{guarnieri2017skew}. In particular, any set-theoretical YBE solution has a corresponding universal skew brace, allowing us to classify set-theoretical YBE solutions, by first classifying such algebraic structures. However, when looking at linear YBE solutions on vector spaces, there is a well established correspondence between these solutions and (co-)quasitriangular Hopf algebras. (Co-)quasitriangular Hopf algebras, provide solutions of YBE via their representation theory, and conversly the Fadeev-Reshitkhin-Takhtajan (FRT) construction produces such a Hopf algebra, from a given YBE solution. From the latter, we see that skew braces replace Hopf algebras, in the set-theoretical world. Hence, it would be natural to ask whether skew braces are related to Hopf algebras in a suitable category related to sets. If so, this relation should allow us to \textbf{(a)} apply the usual categorical Hopf algebra techniques to obtain new skew braces, \textbf{(b)} use the FRT construction and recover the universal skew brace and \textbf{(c)} explain the nature of the two products on a skew brace and their interaction, which has been subject to several studies already. In this work, we show that the correct category to consider is that of complete lattices and join preserving morphisms, $\SL$, and construct skew braces from coquaitriangular Hopf algebras in this category and vice-versa.

The Hopf algebra point of view fails when studying set-theoretical YBE solutions because of two key reasons: 
\begin{enumerate}[(A)]
\item Hopf algebras in the category of sets and functions, $\mathrm{Set}$, are groups and it is easy to check that any (co)quasitriangular structure on a group must be trivial. Hence, we can not obtain YBE solutions by looking at (co)modules over a group in $\mathrm{Set}$.
\item The key ingredient to the FRT construction is Tannaka-Krein duality, which requires the underlying object of the YBE solution to be dualizable, while the only dualizable object in $\mathrm{Set}$ is the set of one element. 
\end{enumerate}
The first naive solution is to look at the category of sets and relations, $\mathrm{Rel}$, where every set has itself as a dual, making the category rigid. However, $\mathrm{Rel}$ is not cocomplete and the colimit needed for Tannaka-Krein reconstruction, \ref{Ecoend}, will not exist. The second naive solution is to move into the cocompletion of $\mathrm{Rel}$, namely $[\mathrm{Rel}^{\op},\mathrm{Set}]$, via the Yoneda embedding. But this category is rather large and the Hopf algebras constructed will not be very intuitive. Instead, we remedy these issues by embedding the category of sets into the category of complete lattices and join-preserving morphisms, $\SL$, via the power-set functor: 
$$\xymatrix@C+48pt@R-17pt{ (\mathrm{Set},\times ,1)  \ar[r]^{\mathrm{inc.}}_{\tiny\mathrm{strong\  monoidal}} & (\mathrm{Rel}, \times ,1) \ar[r]^-{\Pl{-}}_-{\tiny\mathrm{strong\  monoidal}} & (\SL
,\tn,\Pl{1})\\ \small\txt{YBE\  solutions \\\ }\ar@{<->}[r] & \small\txt{YBE\  solutions \\\  } \ar@{<->}[r]& \small\txt{YBE solutions \\ on rigid objects}}$$
In particular, all objects of the form $\Pl{X}$, for a set $X$, are dualizable and conversly YBE solutions on rigid objects of $\SL$, provide set-theoretical YBE solutions. The other major benefit of working in $\SL$, is that we can formulate a deeper connection between co-quasitriangular Hopf algebras in this category and groups with braiding operators, which are groups with a compatible YBE solutions on their underlying sets, see \ref{EbrdOpr1} and \ref{EbrdOpr2}. 

Our results can be summarised as follows: Given a Hopf algebra structure on a complete lattice $\ct{H}$ in $\SL$, we can form a new Hopf algebra by quotienting out the ``kernel" of the counit, Lemma \ref{LQuotHpf}. The counit of this new Hopf algebra will send all non-trivial elements to $1\in \ct{P}(1)$ and in Lemma \ref{LHpfGrp}, we show that this condition is equivalent to the Hopf algebra being the ``group algebra", see Example \ref{EHpfGrpAlg}, of a group. Hence, this process provides a corresponding group for every Hopf algebra in $\SL$, which we call its \emph{remnant} and denote by $R(\ct{H})$. 

It is well-known that the multiplication of a co-quasitriangular Hopf algebra is braided-commutative with respect to a naturally induced braiding, \ref{EBrdCQHA}, on the Hopf algebra. Hence, we demonstrate that given a co-quasitriangular structure on $\ct{H}$, the induced braiding of $\ct{H}$ restricts to a braiding operator on its remnant, Theorem \ref{TRemCQHA}. Additionally, any group with a braiding operator, possesses a secondary group structure on the same set, which makes it a skew brace. On the other hand, Majid has shown that any co-quasitriangular Hopf algebra has an induced secondary multiplication and an antipode which provide it with a braided Hopf algebra structure, called its \emph{transmutation}, in its category of comodules \cite{majid1993transmutation}. A corollary of our work is that the secondary group structure on the remnant agrees with the projection of the transmuted multiplication of $\ct{H} $, Theorem \ref{TRemSkwB}. Finally, in Section \ref{SSkwCQHA}, we show that any skew brace can be recovered as the remnant of a co-quasitriangular Hopf alebra in $\SL$.

We must point out that similar ideas were discussed in \cite{LYZ1,LYZ2}, where Hopf algebras in $\mathrm{Rel}$ are shown to correspond to groups with unique factorisations, $G=G_{+}G_{-}$, and quasitriangular structures on them are fully described. Although the theory is presented for finite dimensional \emph{positive} Hopf algebras, the authors of \cite{LYZ1} are aware that the proofs should work for any finite free $\mathbb{B}$-module, where $\mathbb{B}$ is the Boolean algebra with two elements. Their work translates into the classification of Hopf algebra structures on free lattices i.e. lattices which are of the form $\Pl{X}$ for a set $X$, in $\SL$ and the finiteness condition can be completely avoided due to the rigidity of $\Pl{X}$. We review these results and briefly comment on their proofs in Section \ref{SFrHpfSL}. In \cite{LYZ}, the authors describe the properties of the universal group of a set-theoretical solution by taking inspiration from \cite{LYZ1,LYZ2}, but do not directly connect the works. By providing the correct categorical setting i.e. $\SL$ (which goes beyond the category of $\mathbb{B}$-modules), we are able to present a single machinery which captures both constructions, namely by viewing them as remnants of co-quasitriangular Hopf alebras in $\SL$. 

It has been observed that different set-theoretical YBE solutions can have isomorphic universal skew braces. However, when applying the FRT construction in Section \ref{SFRT}, we shall see that the Hopf algebra associated to the solution, remembers a large part of the original solution. It is only when we take the remnant of the Hopf algebra, that much of this additional data is lost. Naturally, non-isomorphic co-quasitriangular Hopf alebras can have isomorphic remnants and should provide stronger invariants of set-theoretical YBE solutions, while being more difficult to work with. The additional benefit of working in this setting is that one can utilise classical Hopf algebraic techniques such as (co-)double bosonasation to produce new examples of skew braces, however, this will be discussed in another place.

In Appendix \ref{App}, we discuss a natural bijection $\mathfrak{l}:X\rightarrow X$, which is induced when the set $X$ is equipped with a YBE solution $r$. This bijection comes into play when we view $(X,r)$ as a dualizable object in $\mathrm{Rel}$ and appears again in the our reconstructed Hopf algebra for the solution. 

\textbf{Acknowledgements.} The author would like to thank Shahn Majid, for many helpful discussions on the topic 

\section{Preliminaries}\label{SPre}

Throughout the article and particularly, in Sections \ref{SBasics} and \ref{SFRT}, we assume that the reader is familiar with the notion of Hopf algebras in symmetric (braided) monoidal categories and refer to Chapter 9 of \cite{majid2000foundations} for any details which we have left out. We will however present the structure of Hopf algebras in $\SL$ and co-quasitriangular structures on them as definitions, in Section \ref{SHpfSL}, and previous knowledge of Hopf algebras is not essential for the rest of the article.  

\textbf{Notation.} All Hopf algebras considered in this work, will have invertible antipodes, and as noted later, all YBE solutions considered are assumed to be non-degenerate. We will freely use either $m$ and $.$ to denote the multiplication operation, unless otherwise stated. We will denote the elements of the quotient of a set $S$, by $\ov{a}\in S/\sim $, for $a\in S$, unless this is clear from context, in this case we will simply write $a$. All monoidal categories are assumed to be strict, since the associator and other structural morphisms will be trivial in the concerned example. If not stated otherwise a lower case letter such as $x$, will be an element of the set denoted by the upper case lettering, $X$. The number $1$ will both denote the unit element in our constructions and in the right context will denote the set with one element and $\Pl{1}=\st{\emptyset, 1}$. For an arbitrary monoidal category $(\ct{C},\tn ,\un)$, we say an object $X$ in $\ct{C}$ has a. right dual $X^{\vee}$, with duality morphisms $\ev_{X} : X\tn X^{\vee} \rightarrow \un $ and $\cv_{X}:\un \rightarrow X^{\vee} \tn X$ if $(\ev_{X}\tn \id_{X})(\id_{X}\tn \cv_{X})=\id_{X}$ and $(\ev_{X}\tn \id _{X^{\vee}})(\id _{X^{\vee}}\tn \cv_{X})=\id_{X^{\vee}}$ and $X^{\vee}$ will always denote the right dual of $X$.

\subsection{FRT Reconstruction}\label{SBasics}
In this section, we review some necessary results about FRT reconstruction. These results are also present in \cite{majid2000foundations} and go back to \cite{majid1993braided}. However, our presentation of the results directly in terms of the coend is closer to \cite{shimizu2019tannaka}. In particular, we adapt the notation used in \cite{shimizu2019tannaka}.

Let $\tilde{\ct{B}}$ be denote the \emph{rigid extension of the category of braids}, which is the ``smallest" rigid braided monoidal category. Explicitly, $\tilde{\ct{B}}$ is the monoidal category generated by two objects, $\mathbf{x}$ and $\mathbf{y}$, $\ct{B}$ and morphisms $\ev : \mathbf{x}\tn\mathbf{y} \rightarrow\un$, $\cv :\un  \rightarrow \mathbf{y}\tn \mathbf{x}$ and an invertible morphisms $\kappa_{\mathbf{a}, \mathbf{b}} :\mathbf{a}\tn \mathbf{b} \rightarrow \mathbf{b} \tn \mathbf{a} $ for $\mathbf{a},\mathbf{b}\in\lbrace \mathbf{x}, \mathbf{y}\rbrace$ with the relevant relations [Definition 6.12 \cite{shimizu2019tannaka}], which make $\kappa$ the braiding of the category and $\mathbf{y} $ the right dual of $\mathbf{x}$. 

A pair $(X, r)$ is called a \emph{braided object} or \emph{YBE solution}, \cite{joyal1993braided}, in a monoidal category $(\ct{C},\tn, \un )$, if $X$ is an object of $\ct{C}$ and $r:X\tn X\rightarrow X\tn X$ an invertible morphism satisfying 
\begin{equation}\label{ECatYBE}
(\id_{X}\tn r) (r\tn\id_{X})(\id_{X}\tn r)= (r\tn\id_{X})(\id_{X}\tn r)	(r\tn\id_{X})
\end{equation}
If $X$ has a right dual $X^{\vee}$, with duality morphisms $\ev_{X} : X\tn X^{\vee} \rightarrow \un $ and $\cv_{X}:\un \rightarrow X^{\vee} \tn X$, we say the braided object $(X,r)$ is \emph{dualizable} if the morphisms 
\begin{align}
r^{\flat}:= &( \id_{X^{\vee}\tn X}\tn\ev)( \id_{X^\vee} \tn r \tn \id_{X^{\vee}})(\cv \tn \id_{X\tn X^{\vee}})
\\ (r^{-1})^{\flat}:= &( \id_{X^{\vee} \tn X}\tn\ev)( \id_{X^\vee} \tn r^{-1} \tn \id_{X^{\vee}})(\cv \tn \id_{X\tn X^{\vee}})
\end{align}
are invertible. 

Given a dualizable YBE solutions, one can define a functor $\omega :\tilde{\ct{B}}\rightarrow \ct{C} $ by $\omega(\mathbf{x})= X$, $\omega(\mathbf{y})= X^{\vee}$ and 
\begin{align*}
\omega ( \kappa_{\mathbf{x}, \mathbf{x}})= r,\ \omega ( \kappa_{\mathbf{x}, \mathbf{y}})&= (r^{-1})^{\flat},\ \omega ( \kappa_{\mathbf{y}, \mathbf{x}})= (r^{\flat})^{-1},\ \omega ( \kappa_{\mathbf{y}, \mathbf{y}})= r^{\vee} 
\\\omega &(\cv)=\cv_{X},\ \omega (\ev)=\ev_{X}
\end{align*}

Given a YBE solution in a category, it is natural to ask which other braided objects are generated by this solution. For example, if $(X,r) $ is a braided object, then tensor products of $X$ also have induced braidings e.g. $X\tn X $ with the induced braiding $ (\id_{X}\tn r\tn \id_{X})(r\tn r)(\id_{X}\tn r\tn \id_{X})$. The direct approach is to generate the largest category of braided objects generated by $(X,r)$. Explicitly, one starts with $(X,r)$, and adds objects by performing tensor products and possible products, coproducts, etc. Finally, in the spirit of Lyubashenko's work \cite{lyubashenko1986hopf}, one can hope to realise this category as comodules over a Hopf algebra. 

The second point of view, is that objects generated by $(X,r)$ will be those, which ``braid past, what  $(X,r)$ braids past". To make this statement more explicit, we need to recall the definition of the \emph{dual of monoidal functors} or \emph{weak centralizer}. The \emph{dual of a strong monoidal functor} $U:\ct{D}\rightarrow \ct{C}$ is defined as the category whose objects are pairs $(X,\sigma : X\tn U \Rightarrow U\tn X )$, where $X$ is an object of $\ct{C}$ and $\sigma$ a natural (monoidal) isomorphism satisfying $\sigma_{\mathtt{1}}=\id_{X}$ and 
$$(\id_{M}\otimes \tau_{N})(\tau_{M}\otimes \id_{N})=(U_{2}(M,N)^{-1}\otimes \id_{X})\tau_{M\otimes N}(\id_{X}\otimes U_{2}(M,N))$$
where $M,N$ are objects of $\ct{C}$. We denote the dual by $\ct{W}(U)$ and the \emph{lax} left dual, where $\tau$ is not assumed to be isomorphisms by $\ct{W}_{l}(U)$. Note that the left (lax) dual lifts the monoidal structure via  
$$ (X,\tau )\otimes (Y, \rho):= (X\otimes Y , (\tau \otimes \id_{Y})(\id_{X}\otimes\rho ) )$$ 
with $(\mathtt{1}_{\mathcal{C}}, \id_{X})$ as its unit and the strict monoidal functor $\overline{U} :\ct{W}_{(l)}(U)\rightarrow \mathcal{C}$ defined by $\overline{U} (X,\tau )=X$.

With the notion of the dual in mind, given a braided object and its corresponding functor $\omega :\tilde{\ct{B}}\rightarrow \ct{C} $, all the braided objects generated by $(X,r)$ must braid past objects of $\ct{W}(U)$. In particular if $\ov{U}: \ct{W}(U)\rightarrow \ct{C}$ denotes the corresponding forgetful functor, then all the generated braided object will be objects of $\ct{W}_{l}(\ov{U})$. We now recall, the conditions under which this category can be recovered as the comodule category of a co-quasitriangular Hopf algebra \cite{majid1993braided}. 

Let $\ct{D}$ be a small monoidal category and $\omega: \ct{D}\rightarrow \ct{C}$ be a strict monoidal functor. We consider the functor  $\omega\tn\omega^{\vee}:\ct{D}\times \ct{D}^{\op}\rightarrow \ct{C}$ and denote its \emph{coend} [Chapter IX.6 \cite{mac2013categories}], if it exists, by 
\begin{equation}\label{Ecoend}
H_{\omega}:=\int^{a\in \ct{D}}	\omega(a)\tn\omega(a)^{\vee}
\end{equation}
Recall that the coend is the colimit of the diagram consisting of objects $ \omega(a)\tn\omega(b)^{\vee} $ and parallel pairs $\omega (f)\tn \id_{\omega(b)^{\vee}}, \id_{\omega (a)}\tn \omega(f)^{\vee} $ corresponding to objects $a,b\in \ct{D}$ and morphisms $f:a\rightarrow b$ in $\ct{D}$, respectively. 

\begin{thm}\label{TFRTmain} If $(\ct{C},\Psi)$ is a symmetric monoidal category, and the mentioned coend exists, it comes equipped with the structure of a bialgebra, such that $\ct{W}_{l}(\ov{U})$ is monoidal equivalent to the the category of left $H_{\omega}$-comodules, $\prescript{H_{\omega}}{}{\ct{C}}$. Additionally, if $\ct{D}$ is rigid, then $H_{\omega} $ admits a bijective antipode, making it a Hopf algebra object in $\ct{C}$. If $\ct{D}$ is braided, then $H_{\omega}$ has an induced co-quasitriangular structure.  
\end{thm}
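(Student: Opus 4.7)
The plan is Tannakian reconstruction via the universal property of the coend. Write $i_a:\omega(a)\tn\omega(a)^\vee\to H_\omega$ for the universal dinatural family. Every morphism out of $H_\omega$, and (using that each $\omega(a)\tn -$ is cocontinuous by rigidity) out of tensor powers of $H_\omega$, is then determined by its precomposition with the $i_a$'s, so each structure morphism will be defined by a formula on $\omega(a)\tn\omega(a)^\vee$ and each axiom reduced to an identity there provable from the duality morphisms.

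First, I would define the bialgebra structure. Take $\eta=i_\un$, determine the counit by $\varepsilon\circ i_a=\ev_{\omega(a)}$, and the comultiplication by
\[
\Delta\circ i_a = (i_a\tn i_a)\circ(\id_{\omega(a)}\tn\cv_{\omega(a)}\tn\id_{\omega(a)^\vee}).
\]
The multiplication is defined on $i_a\tn i_b$ by permuting $\omega(b)$ past $\omega(a)^\vee$ via the symmetry $\Psi$ of $\ct{C}$ and then composing with $i_{a\tn b}$, after identifying $(\omega(a)\tn\omega(b))^\vee$ with $\omega(b)^\vee\tn\omega(a)^\vee$ and using strong monoidality of $\omega$. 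Associativity, coassociativity, unitality and counitality are formal consequences of dinaturality and the zig-zag identities; the bialgebra compatibility $\Delta m=(m\tn m)(\id\tn\Psi\tn\id)(\Delta\tn\Delta)$ is precisely where the hypothesis $\Psi\Psi=\id$ is needed, guaranteeing that we obtain a bialgebra rather than only a coquasitriangular-type structure.

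Second, I would produce the monoidal equivalence. By the universal property, a coaction $\delta:V\to H_\omega\tn V$ is the same data as a dinatural family $\tau_a:V\tn\omega(a)\to\omega(a)\tn V$, and the comodule axioms translate into unitality $\tau_\un=\id_V$ and the hexagon required of an object of $\ct{W}_l(\ov U)$. Monoidality of the equivalence follows from the definition of $m$, which is engineered so that $\tau^{V\tn W}=(\tau^V\tn\id_W)(\id_V\tn\tau^W)$. For the antipode, when $\ct{D}$ is rigid, I would use $(-)^\vee:\ct{D}^{\op}\to\ct{D}$: set $S\circ i_a$ to be $i_{a^\vee}$ precomposed with the canonical identification $\omega(a^\vee)\cong\omega(a)^\vee$ coming from strong monoidality, so that bijectivity follows from $(-)^\vee$ being an equivalence and the antipode identities reduce once again to zig-zags. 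Finally, when $\ct{D}$ is braided with braiding $c$, the coquasitriangular form $\Rr:H_\omega\tn H_\omega\to\un$ is determined by
\[
\Rr\circ(i_a\tn i_b) = (\ev_{\omega(b)}\tn\ev_{\omega(a)})\circ(\id_{\omega(b)}\tn\omega(c_{a,b})\tn\id_{\omega(a)^\vee\tn\omega(b)^\vee}),
\]
with the coquasitriangularity axioms following from the hexagon relations satisfied by $c$.

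The main obstacle is the bookkeeping in showing the equivalence lands precisely in $\ct{W}_l(\ov U)$ and is monoidal, rather than only matching $\ct{W}_l(\omega)$: one must verify that the family $\tau$ extracted from a comodule extends coherently past \emph{every} half-braiding already present in $\ct{W}(\omega)$, and that tensor products of comodules correspond exactly to the horizontal composition of their families. Both statements rest ultimately on dinaturality together with the symmetry $\Psi\Psi=\id$, but the two-step nature of the centralizer makes this the most delicate part of the argument; a subsidiary concern is ensuring that $\tn$ preserves the relevant coends, which is automatic here from the rigidity of each $\omega(a)^\vee$.
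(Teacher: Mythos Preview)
The paper does not actually prove this theorem: immediately after the statement it refers the reader to Chapter~9 of \cite{majid2000foundations} and Theorem~4.3 of \cite{shimizu2019tannaka}, and then merely records the structure morphisms $m,\eta,\Delta,\epsilon,S,\Rr$ on $H_\omega$ by specifying their composites with the universal maps $\mu_x$. Your sketch is precisely the standard coend-based Tannaka reconstruction carried out in those references, and your formulas for $\eta,\epsilon,\Delta,m$ agree with the ones the paper records; so in that sense your approach coincides with what the paper invokes.

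One genuine slip: your displayed formula for $\Rr$ does not type-check. The domain of $\Rr\circ(i_a\tn i_b)$ is $\omega(a)\tn\omega(a)^\vee\tn\omega(b)\tn\omega(b)^\vee$, so before applying $\omega(c_{a,b})$ you must first use the symmetry $\Psi$ of $\ct{C}$ to move $\omega(b)$ past $\omega(a)^\vee$ (exactly as in your definition of $m$), and the final evaluation is the nested $\ev_{\omega(b)\tn\omega(a)}$ rather than $\ev_{\omega(b)}\tn\ev_{\omega(a)}$. The paper's version reads
\[
\Rr(\mu_x\tn\mu_y)=\ev_{\omega(y\tn x)}\big(\omega(\psi_{x,y})\tn\id\big)\big(\id_{\omega(x)}\tn\Psi_{\omega(x)^\vee,\omega(y)}\tn\id_{\omega(y)^\vee}\big),
\]
and the leading $\id_{\omega(b)}$ in your expression has no place in either version. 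This is a bookkeeping error rather than a conceptual one; the rest of your outline, including your identification of the two-step centralizer $\ct{W}_l(\ov{U})$ as the delicate point, is sound.
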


Here we only recall the induced co-quasitriangular Hopf alebra structure on $H_{\omega}$. The proof of this result can be found in Chapter 9 of \cite{majid2000foundations}, where the coend is is described in terms of natural transformations between certain functors or in Theorem 4.3 of \cite{shimizu2019tannaka}, which uses the language we will present it as.

Because of the simplicity of our examples, we will be assuming that the functor $\omega$ is strict monoidal and additionally $\omega(x)^{\vee}=\omega (x^{\vee})$ for all $x\in \ct{D}$. Let $\mu_{x}: \omega(x)\tn\omega(x)^{\vee}\rightarrow H_{\omega} $ denote the unique natural morphisms, making $H_{\omega}$ the colimit of the diagram. The Hopf algebra structure on $H_{\omega} $ consists of $(m,\eta,\Delta, \epsilon, S)$ which are the unique morphisms satisfying:
\begin{align*}
m:H_{\omega}\tn H_{\omega}\rightarrow H_{\omega}\ ;&\quad m(\mu_{x}\tn \mu_{y})=\mu_{x\tn y}\big( \id_{\omega(x)}\tn \Psi_{\omega(x)^{\vee},\omega(y)}\tn \id_{\omega(y)^{\vee}}\big) 	
\\  \eta :\un \rightarrow H_{\omega}\ ;& \quad \eta=\mu_{\un}
\\\Delta: H_{\omega}\rightarrow H_{\omega}\tn H_{\omega}\ ;& \quad\Delta\mu_{x} = (\mu_{x}\tn\mu_{x})\big(\id_{\omega(x)}\tn \cv_{\omega(x)}\tn \id_{\omega(x)^{\vee}}\big)
\\\epsilon :H_{\omega}\rightarrow \un\ ;& \quad \epsilon\mu_{x}=\ev_{\omega(x)}
\\S: H_{\omega}\rightarrow H_{\omega}\ ;\quad S\mu_{x}&= \mu_{x^{\vee}}(\ev_{\omega(x)}\tn\id_{\omega(x^{\vee})\tn \omega(x^{\vee\vee})})( \Psi_{\omega(x^{\vee})\tn \omega(x^{\vee\vee}),\omega(x^{\vee}) })
\\ &\quad ( \id_{\omega(x)\tn \omega(x)^{\vee}}\tn \omega(\ev_{x})^{\vee})
\end{align*}
where $x,y\in \ct{D}$. As we will see in our example, $\Psi$ and $\ev,\cv$ are rather trivial and the expressions will be much simpler to deal with. The key ingredient which we need, lies in the co-quasitriangular structure $\Rr:H_{\omega}\tn H_{\omega} \rightarrow \un$ induced on $H_{\omega}$, when $\ct{D}$ is a braided category. If $\psi$ denotes the braiding of $\ct{D}$, then $\Rr $ is the unique morphism satisfying
\begin{equation*}
\Rr (\mu_{x}\tn\mu_{y})=\ev_{\omega(y\tn x)} (\omega (\psi_{x,y})\tn \id_{\omega(x)^{\vee}\tn\omega(y)^{\vee}})( \id_{\omega(x)}\Psi_{\omega (x)^{\vee},\omega (y)}\tn \id_{\omega(y)^{\vee}})	
\end{equation*}
for any pair of objects $x,y\in \ct{D}$.   
\subsection{Set-theoretical YBE Solutions and Skew Braces}\label{SSkwBasic}
In this section, we review the theory of Skew braces, with \cite{smoktunowicz2018skew} as our main reference. 

We have already defined what a set-theoretical YBE solution $(X,r)$ is in the introduction. We will use the notation $r(x,y)= (\sigma_{x}(y) ,\gamma_{y}( x) )$, for maps $\sigma_{x},\gamma_{y}:X\rightarrow X$ corresponding to elements $x,y\in X$. The YBE solution is said to be \emph{non-degenerate} if $\sigma_{x},\gamma_{y}$ are bijections for all $x,y\in X$. For such a solution, where $r$ is bijective, we adapt the notation $r^{-1}(x,y)= (\tau_{x}(y) ,\rho_{y}( x) )$, for maps $\tau_{x},\rho_{y}:X\rightarrow X$ corresponding to elements $x,y\in X$. The solution is called \emph{involutive} if $\sigma_{x}=\tau_{x}$ and $\gamma_{x}=\rho_{x}$ for all $x\in X$.

\textbf{Notation.} We will only consider non-degenerate solutions and from here forward a set-theoretical YBE solution will refer to a non-degenerate one.

We now recall the definition of the a braiding operator on a group and the definition of the universal group of a set-theoretical YBE solution from \cite{LYZ}.

A braiding operator on a group $(G,m,e)$ is a map $r:G\times G\rightarrow G\times G$ satisfying 
\begin{align}
 mr(a,b)&= a.b
\\ r(e,g)=(g,e), \quad  &r(g,e) =(e,g) 
\\r(g.h, f) = (\id_{G}\times m)&(r\times \id_{G})(g, r(h,f))\label{EbrdOpr1}
\\r(g,h.f)=(m\times \id_{G}) &(\id_{G}\times r)(r(g,h),f) \label{EbrdOpr2}
\end{align}
for any $g,h,f\in G$. It follows from these axioms, that $r$ is invertible and satisfies the YBE equation [Corollary 1 of \cite{LYZ}] i.e. $(G,r)$ is a braided object in $\mathrm{Set}$. A pair $(G,r)$ is sometimes referred to as a \emph{braided group}. We avoid this term, since in our main reference \cite{majid2000foundations}, the term ``braided group" is used to discuss braided Hopf algebras in braided monoidal categories. 

Given a set-theoretical YBE solution $(X,r)$, we consider the group 
$G(X,r)= F _{g}(X)/ \langle x.y= \sigma_{x}(y).\gamma_{y}(x) \mid x,y\in X\rangle$,
where $F_{g}(X)$ denotes the free group generated by the set $X$. Observe that the braiding on $X$, extends to a braiding operator $\ov{r}$ on $G(X,r)$, defined by $\ov{r}(\ov{x},\ov{y})= \big(\ov{\sigma_{x}(y)}, \ov{\gamma_{y}(x)} \big)$. Additionally, the natural map $i: X\rightarrow G(X,r)$ defined by $x\mapsto \ov{x}$ commutes with the braiding operators on both sets. One must keep in mind that $i $ is not necessarily injective, but satisfies a universal property with respect to groups with braiding operators. Explicitly, if $(H,s)$ is a group with a braiding operator and $f:X\rightarrow H$, a map which commutes with the respective braidings, then $f$ must factorise through $i$. 

Now we review the theory of Skew braces with reference to \cite{smoktunowicz2018skew}, although we adapt the notation of \cite{bachiller2018solutions,bachiller2016study}, which is compatible with \cite{LYZ}. 

A \emph{skew left brace} consists of a set $B$ with two group structures $(B,.)$ and $(B,\star )$, satisfying 
\begin{equation}\label{ESkwBr}
a.(b\star c) = (a.b)\star a^{\star} \star (a.c)	\end{equation}
for all $a,b,c\in B$, where we denote the multiplicative inverse of $a$ with respect to $.$ and $\star$ by $a^{-1}$ and $a^{\star}$, respectively. A skew \emph{right} brace can be defined accordingly [Definition 2.1 \cite{bachiller2018solutions}]. However, as we will see in Remark \ref{RRightBr}, the theory of right skew braces is symmetric to that of skew left braces. Hence, in this paper we will only work with skew left braces and refer to them as \emph{skew braces}. 

The reader should be careful when referring to other works related to skew braces, since many other sources use the notation $\circ$ and $.$ instead of $.$ and $\star$, respectively. 

Skew braces are in fact equivalent to groups with braiding operators. This is in fact proved in Theorem 2 of \cite{LYZ}, but is stated in the language of skew braces in Section 3 of \cite{smoktunowicz2018skew}. 
\begin{thm}\label{TSkwMain} Given a group $(G,.)$, the following additional structures on $G$ are equivalent:
\begin{enumerate}[(I)]
	\item A second group structure $(G,\star)$, which makes $G$ a skew brace.
	\item A braiding operator $r:G\times G\rightarrow G\times G$ on the group $G$.
\end{enumerate}
\end{thm}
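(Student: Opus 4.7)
The plan is to establish the equivalence via mutually inverse explicit constructions between braiding operators and second group structures on $(G,.)$, and to trace carefully how the axioms translate into each other.

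\textbf{From (II) to (I).} Writing a braiding operator as $r(a,b) = (\sigma_a(b), \gamma_b(a))$, I first unpack (\ref{EbrdOpr1}) and (\ref{EbrdOpr2}) into the four identities $\sigma_{a.b} = \sigma_a \circ \sigma_b$ (so $\sigma:(G,.) \to \mathrm{Sym}(G)$ is a group homomorphism), $\gamma_{h.f} = \gamma_f \circ \gamma_h$, together with the cocycle-type cross identities $\sigma_g(h.f) = \sigma_g(h).\sigma_{\gamma_h(g)}(f)$ and $\gamma_f(g.h) = \gamma_{\sigma_h(f)}(g).\gamma_f(h)$. I then define a second operation by $a \star b := a.\sigma_a(b)$. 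The identity axioms $r(e,g)=(g,e)$ and $r(g,e)=(e,g)$ give $\sigma_e = \id$ and $\sigma_a(e) = e$, which make $e$ the two-sided $\star$-identity; non-degeneracy of $r$ yields bijectivity of each $\sigma_a$, so $\star$-inverses exist; associativity of $\star$ follows from the cross identity for $\sigma$ combined with its multiplicativity. Finally, (\ref{ESkwBr}) reduces to a single calculation expanding each side via the definition of $\star$ and applying the cross identities.

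\textbf{From (I) to (II).} Given a skew brace $(G, ., \star)$, set $\lambda_a(b) := a^\star \star (a.b)$. The axiom (\ref{ESkwBr}) immediately gives $\lambda_a(b\star c)=\lambda_a(b)\star\lambda_a(c)$, so $\lambda_a$ is a $\star$-endomorphism; substituting $c=b^\star$ in (\ref{ESkwBr}) yields the auxiliary identity $(a.b)^\star = a^\star\star(a.b^\star)\star a^\star$, from which $\lambda_{a.b}=\lambda_a\circ\lambda_b$ follows by a short manipulation, so each $\lambda_a$ is an automorphism with inverse $\lambda_{a^{-1}}$. Defining $r(a,b) := \bigl(\lambda_a(b),\,\lambda_a(b)^{-1}.a.b\bigr)$, the axiom $mr=m$ is automatic, the unit axioms reduce to $\lambda_e=\id$ and $\lambda_a(e)=e$, and the mixed axioms (\ref{EbrdOpr1})--(\ref{EbrdOpr2}) unwind into the multiplicativity of $\lambda$ together with a routine check using the cross identity derived from (\ref{ESkwBr}).

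\textbf{Mutual inversion and main obstacle.} To close the loop, I verify that the two assignments are mutually inverse: starting from $r$ and defining $a\star b = a.\sigma_a(b)$ leads to $\lambda_a = \sigma_a$ by direct substitution, and the second component of $r$ is recovered from the identity $\sigma_a(b).\gamma_b(a)=a.b$ built into $mr=m$; the opposite check is analogous. The technical heart of the proof is the translation between the skew brace distributivity (\ref{ESkwBr})—with its central $a^\star$ factor on the right-hand side—and the cocycle twist by $\gamma_h(g)$ in the braiding compatibilities. Matching these two controlled asymmetries, while using non-degeneracy freely to invert $\sigma_a$ and pass between the $\cdot$- and $\star$-worlds, is the step where I expect the real work to concentrate.
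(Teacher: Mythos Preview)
Your overall strategy---explicit mutually inverse constructions plus verification of axioms---is exactly how this theorem is proved in the literature, and it is essentially what the paper does (the paper does not give its own proof but cites \cite{LYZ} and \cite{smoktunowicz2018skew} and simply recalls the two directions). However, your formula for $\star$ in the direction (II)$\Rightarrow$(I) is off by an inverse, and this is a genuine error rather than a cosmetic one.

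You define $a\star b:=a.\sigma_a(b)$, whereas the paper (and the cited sources) use $a\star b:=a.\sigma_a^{-1}(b)=a.\sigma_{a^{-1}}(b)$. With your definition, associativity of $\star$ would require $\sigma_{a.\sigma_a(b)}=\sigma_{\gamma_b(a).b}$, i.e.\ essentially $a.\sigma_a(b)=\gamma_b(a).b$; but the axiom $mr=m$ only gives $\sigma_a(b).\gamma_b(a)=a.b$, and the two are not equivalent in a non-abelian group. Concretely, take the conjugation braiding on $G=S_3$, where $\sigma_a(b)=aba^{-1}$: then your operation is $a\star b=a^{2}ba^{-1}$, and one checks directly that $((12)\star(13))\star(23)=(23)$ while $(12)\star((13)\star(23))=(13)$, so $\star$ is not associative. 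The same example shows your mutual-inversion claim fails: with your $\star$ one computes $\lambda_a(b)=a^{\star}\star(a.b)=a^{-1}ba=\sigma_a^{-1}(b)$, not $\sigma_a(b)$.

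The fix is simply to replace $\sigma_a$ by $\sigma_a^{-1}$ in the definition of $\star$; then the associativity check reduces to $\sigma_{\sigma_a(b').\gamma_{b'}(a)}=\sigma_{a.b'}$, which \emph{does} follow from $mr=m$, and your claimed mutual inversion $\lambda_a=\sigma_a$ becomes correct. Once that sign is repaired, the rest of your plan goes through and matches the standard argument the paper invokes.
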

We briefly recall this correspondence from the mentioned sources. Given a skew brace structure as in part (I), the induced braiding operator on $G$ is defined by 
\begin{equation}\label{EBrdofSkw}
(a,b)\longmapsto \big( a^{\star}\star(a.b),(a^{\star}\star(a.b))^{-1}.a.b\big)\end{equation}
for $a,b\in G$. Conversely, given a braiding operator $r$ with the usual notation $r(x,y)=(\sigma_{x}(y),\gamma_{y}(x))$, the operation $x\star y:= x.\sigma_{x}^{-1}(y)=x.\sigma_{x^{-1}}(y)$ defines a compatible group structure on $G$. In fact there's a third equivalent structure namely the existence of a bijective 1-cocycle, which we will not discuss. 

For a skew brace $(B,.,\star)$, by axiom \ref{EbrdOpr1}, the map $\lambda : (B,.)\rightarrow \mathrm{Aut}(B,\star)$ defined which sends an element $a\in B$ to the map $\lambda_{a}(b)=   a^{\star}\star(a.b) $, is a group morphisms. Hence, to any skew brace $(B,.,\star)$, one can associate a group structure on the set $B\times B$, namely the cross product $(B,\star) \rtimes (B,.)$, with the multiplication 
\begin{equation}\label{ECrssGrp}
(a,b).(c,d)= (a\star \lambda_{b}(c), b.d)
\end{equation}
for $a,b,c,d\in B$. This group is called the \emph{crossed group} of $(B,.,\star)$ [Definition 1.10 \cite{smoktunowicz2018skew}].

\subsection{The Category $\SL$}\label{SSL}
We briefly review the monoidal closed structure of $\SL$ and its colimits, from Chapter 1 of \cite{joyal1984extension}. The reader can refer to this source for additional details on the structure of $\SL$.

The objects of $\SL$ are partially ordered sets $(\Ll,\leq )$, where any subset $S\subseteq \Ll$, has a least upper bound i.e. an element denote by $\vee S$ such that if $l\in\Ll$ satisfies $s\leq l$ for all $s\in S$, then $\vee S\leq l$ holds. In this case, $\vee S$ is  called the \emph{supremum} or \emph{join} of $S$. If $S=\lbrace a_{i}\mid i\in I\rbrace$ for some index set $I$, we use notation $\vee_{i\in I} a_{i}$ instead of $ \vee S$. The morphisms of $\SL$ are join-preserving maps between sets. 

Notice that a partially ordered set in $\SL$ also admits all infima, i.e the greatest lower bound or \emph{meet} of a subset $S\in \ct{L}$ will be $\vee \lbrace a\mid a\leq s\ ,\forall s\in S \rbrace$ and is denoted by $\wedge S$.
Hence, all objects of $\SL$ are \emph{complete lattices}, but join-preserving morphisms between them, do not necessarily preserve infima. We denote the smallest element of every complete lattice by $\emptyset$. 

\begin{ex}\label{ECmpLat} The simplest example of a complete lattice is the power-set of a set $X$, denoted by $\ct{P}(X)$, with $\vee$ and $\wedge$ being the union and intersection of subsets, respectively. This example is often called the \emph{free lattice} on set $X$. Alternatively, we can consider the set of positive integers, $\mathrm{div}(z)$, which divide a positive integer $z$. The set $\mathrm{div}(z)$ possesses a partial ordering by division, and $\vee$ and $\wedge$ are given by the lowest common multiple and the greatest common divisor, respectively. 
\end{ex}

The category $\SL$, is complete and cocomplete. Limits in $\SL$ are easy to construct and follow directly from $\mathrm{Set}$. The equaliser of to morphisms between complete lattices is exactly the equalizer of the two underlying maps between the two underlying sets and the product of  complete lattices is the product of the underlying sets with coordinate-wise order. 

The coproduct of complete lattices $\ct{L}_{i}$ for an index set $I$ with suitable cardinality, which will always be the case in our work, will again be the product of the sets $\ct{L}_{i}$, with coordinate-wise order $\prod_{i\in I}\ct{L}_{i}$. The lattice $\prod_{i\in I}\ct{L}_{i}$ is viewed as a coproduct via the inclusion morphisms $\ct{L}_{i}\rightarrow \prod_{j\in I}\ct{L}_{j}$, which for a fixed $i\in I$ send an element $l\in \ct{L}_{i}$ to $(a_{j})_{j\in I}$, where $a_{i}= l$ and $a_{j}=\emptyset$ for $j\neq i$.

The coequalizer of a parallel pair $f,g:\ct{L}\rightrightarrows \ct{N}$ between lattices is more difficult to describe. First, we recall the description provided in Proposition 3 of \cite{joyal1984extension}: Let $\ct{K} $ be the subset of elements in $k\in\ct{N}$, which satisfy the following property: $\forall l\in \ct{L}$, either $f(l)\vee g(l) \leq k $ or neither $f(l)\leq k$ nor $g(k)\leq k$ hold. The partial order of $\ct{N}$ restricts to $\ct{K}$ and the morphism $c:\ct{N}\rightarrow\ct{K}$, defined by $n \mapsto \wedge\lbrace k\in \ct{K}\mid n \leq k \ \text{in}\ \ct{N}\rbrace$, makes $\ct{K}$, the coequalizer of the parallel pair $f,g$. 

If we compose the morphism $c$ with the inclusion of $\ct{K}$ into $\ct{N}$, we obtain a \emph{map}, no longer a join-preserving morphism of lattices, $\underline{c}: \ct{N}\rightarrow \ct{N}$. It is easy to see that $\underline{c}$ is a \emph{closure operator} i.e satisfies $\underline{c} ^{2}= \underline{c} $, $a\leq\underline{c}(a) $ and $\underline{c} (a) \leq \underline{c} (b) $ for $a,b\in \ct{N}$, where $a\leq b$. Hence, we define $\ov{\ct{K}}$ to be the quotient of set $\ct{N}$ by the equivalence relation $n\sim \underline{c}(n)$ for all $n\in \ct{N}$. In other words, $\ov{\ct{K}}$ is the coequalizer of the parallel pair $\id_{\ct{N}},\underline{c}$ in $\mathrm{Set}$. It follows that $\ov{\ct{K}}$ has a partial order defined by $\ov{a}\leq \ov{b}$ iff $\underline{c}(a)\leq \underline{c}(b)$ and thereby the natural map $\ov{()}:\ct{N}\rightarrow \ov{\ct{K}} $ defined by $n\mapsto \ov{n}$ is order-preserving. Additionally, we claim that $\ov{\vee_{i\in I} n_{i}}= \vee_{i\in I} \ov{n_{i}}$, making $\ov{\ct{K}}$ a complete lattice and $\ov{()}$ a join-preserving morphism. Notice that $\underline{c} ( \vee_{i\in I} n_{i})\geq \underline{c} (n_{i})$ holds and if $\ov{n}\geq \ov{n_{i}}$ for all $i\in I$, then 
\begin{align*}
\underline{c}(n)\geq \underline{c}(n_{i})\geq n_{i}\ \forall i\in I\ \Rightarrow \underline{c}( n)\geq \vee_{i\in I} n_{i}\ \Rightarrow \underline{c}(n)= \underline{c} ^{2}(n)\geq \underline{c} (\vee_{i\in I} n_{i})	
\end{align*}
which proves our claim. It should be clear to see that $\ov{()} $ factorises through an isomorphism between $\ct{K}$ and $\ov{\ct{K}}$. Furthermore, observe that in $\mathrm{Set}$, the map $\ov{()} : \ct{N} \rightarrow \ov{\ct{K}} $ admits a section defined by $\ov{b}\mapsto \underline{c} (b)$. This map is a join-preserving morphism iff $\underline{c}$ is join preserving.  
 
Now, let us recall the monoidal structure of $\SL$. For lattices $\ct{M},\ct{N}$ and $\ct{L}$, a map $f:\ct{M}\times \ct{N}\rightarrow \ct{L}$ is called a \emph{bimorphism} if it preserves suprema component-wise i.e. $f(\vee_{i\in I}m_{i}, n) = \vee_{i\in I} f( m_{i},n)$ and $f(m,\vee_{i\in I}n_{i})=\vee_{i\in I}(m,n_{i}) $. There exists a lattice $\ct{M}\tn \ct{N}$ with the universal property that any bimorphism $f$, as above, factorises through a morphism $\ov{f}: \ct{M}\tn \ct{N}\rightarrow \ct{L}$. Explicitly, $\ct{M}\tn \ct{N}$ can be described as the quotient of the lattice $\ct{P}(\ct{M}\times \ct{N}) $ by relations $\lbrace (\vee_{i\in I}m_{i}, n)\rbrace = \cup_{i\in I} \lbrace( m_{i},n)\rbrace$ and $\lbrace(m,\vee_{i\in I}n_{i})\rbrace=\cup_{i\in I}\lbrace(m,n_{i})\rbrace $. Although, we view $\ct{M}\tn \ct{N} $ as a quotient of $\ct{P}(\ct{M}\times \ct{N}) $, we will denote its elements by $\lbrace (m,n)\rbrace $ instead of $\ov{\lbrace (m,n)\rbrace}$. Hence, $\ct{M}\tn \ct{N} $ is ``spanned", via $\cup$, by elements of the form $\lbrace (m,n)\rbrace$ and the relations induce component-wise ordering on these elements i.e. if $m\leq m'$, then $\lbrace(m,n) \rbrace\leq \lbrace(m',n) \rbrace $ since $m\vee m'=m'$.

In fact $\tn $, defines a bifunctor and provides a symmetric monoidal structure on $\SL$. The unit of the monoidal structure is given by $\ct{P}(1)$, where $1$ denotes the set with one element. More generally, for a pair of sets $X,Y$, $\ct{P}(X)\tn \ct{L} \cong\prod_{x\in X} \ct{L}$ and $\ct{P}(X)\tn \ct{P}(X)\cong \ct{P}(X\times Y)$ [Proposition 2 \cite{joyal1984extension}]. Hence, the natural power-set functor $\ct{P}:\mathrm{Set}\rightarrow \SL$ is strong monoidal. The symmetric structure of course follows from the symmetry of $\times$ and viewing $\ct{M}\tn \ct{N}$ as a quotient of $\ct{P}(\ct{M}\times \ct{N}) $.

There exists a natural partial ordering on the hom-sets between complete lattices defined by $f\leq g$ iff $f(m)\leq g(m),\ \forall m \in \ct{M}$, for $f,g\in \mathrm{Hom}(\ct{M},\ct{N})$. By taking point-wise suprema, one can conclude that $\mathrm{Hom}(\ct{M},\ct{N})$ is also a complete lattice. Furthermore, one can prove the familiar hom-tensor adjunction, which makes $\SL$ a closed monoidal category. Next we characterise the \emph{rigid} or \emph{dualizable} objects in $\SL$.

\begin{defi}\label{DBasis} We say a lattice $\ct{L}$ has a \emph{basis} if there exists a set of elements $\lbrace l_{i} \rbrace_{i\in I}$, such that 
\begin{enumerate}[(A)]
\item For any element $ l\in \ct{L}$, there exists a (not necessarily unique) subset $J_{l}\subset I$, such that $l=\vee_{i\in J_{l}}l_{i}$
\item For any pair $i,j\in I$, $l_{j}\leq l_{i} $ iff $i=j$	
\end{enumerate}
hold. 	
\end{defi}

Observe that condition (B), implies that there are no elements under the basis elements i.e. if $\emptyset\neq l\leq l_{i}$, then $l=l_{i}$. Any free lattice $\ct{P}(X)$ admits a basis, consisting of singleton subsets $\lbrace x\rbrace $ for elements $x\in X$. Additionally, $\mathrm{div}(6)$ admits a basis $\lbrace 2, 3\rbrace$, while $\mathrm{div}(4)$ does not. More generally, $\mathrm{div}(z)$ admits a basis iff the power of all primes present in the prime factorisation of $z$, is one.  

It's easy to see that any complete lattice $\ct{L}$ with basis $\lbrace l_{i}\rbrace_{i\in I}$, is a rigid object in $\SL$, with itself as its dual. Since $\ct{L}$ has a basis, then $\ct{L}\tn\ct{L}$ also admits a basis with elements $\lb{l_{i}}{l_{j}}$ for pairs $i,j\in I$. Hence, it is straightforward to check that morphisms $\cv : \ct{P}(1)\rightarrow \ct{M}\tn \ct{L} $ defined by $\cv(1)= \cup_{i\in I}\lb{l_{i}}{l_{i}}$ and $\ev : \ct{L}\tn \ct{L}\rightarrow \ct{P}(1)$ defined by $\ev \lb{l_{i}}{l_{j}}=1$ iff $i=j$ which extends to $\Ll\tn\Ll$, by a choice of $J_{l}$ for every $l$, are well-defined morphisms and satisfy the duality axioms. Notice that although we do not assume that for every $l$, there be a unique subset $J_{l}\subset I$, we require a \emph{choice} of such a subset to enable us to define the evaluation morphism, $\ev$. 
\begin{lemma}\label{LSLRig} If $\mathcal{L}$ is a rigid object in $\SL$, then $\ct{L}$ has a basis. 
\end{lemma}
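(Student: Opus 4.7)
The plan is to extract a basis of $\ct{L}$ directly from the coevaluation morphism and then refine it. Since the tensor $\ct{L}^{\vee}\tn\ct{L}$ is a quotient of $\ct{P}(\ct{L}^{\vee}\times\ct{L})$ by the bilinearity relations, I may write $\cv(1)=\bigcup_{i\in I}\lb{k_i}{l_i}$ for some family in $\ct{L}^{\vee}\times\ct{L}$, discarding indices with $k_i=\emptyset$ or $l_i=\emptyset$ (which contribute nothing). The snake identity $(\ev\tn\id_{\ct{L}})(\id_{\ct{L}}\tn\cv)=\id_{\ct{L}}$ evaluated at an arbitrary $l\in\ct{L}$ then yields
\[
l=\bigvee_{i\in J_l}l_i,\qquad J_l:=\{i\in I : \ev(l\tn k_i)=1\},
\]
and by monotonicity of $\ev$ each $l_i$ appearing in this sum satisfies $l_i\leq l$.

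Next, using the bilinearity relation $\lb{k\vee k'}{l}=\lb{k}{l}\cup\lb{k'}{l}$, I merge summands whose $l_i$ coincide and view the family as indexed by a subset $B\subseteq\ct{L}\setminus\{\emptyset\}$; writing $k_b$ for the $k$-coordinate paired with $b\in B$, the identity above becomes $l=\bigvee\{b\in B : b\leq l\}$ for every $l\in\ct{L}$, which establishes condition (A) of the basis definition with respect to $B$.

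To conclude condition (B), I plan to refine the family so that it becomes \emph{biorthogonal}, meaning $\ev(b\tn k_{b'})=1$ precisely when $b=b'$ for $b,b'\in B$. The symmetric snake identity supplies an analogous decomposition $k=\bigvee\{k_b : \ev(b\tn k)=1\}$ in $\ct{L}^{\vee}$, and combining it with a Zorn-type minimality argument on the family should yield the required refinement. Under biorthogonality, the maps $\phi:\ct{P}(B)\to\ct{L}$, $S\mapsto\bigvee S$ and $\psi:\ct{L}\to\ct{P}(B)$, $l\mapsto\{b\in B : \ev(l\tn k_b)=1\}$ are mutually inverse morphisms in $\SL$, so $\ct{L}\cong\ct{P}(B)$ and the elements of $B$ are precisely the atoms of $\ct{L}$; this immediately forces the antichain condition (B) via the remark following the definition. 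The main technical obstacle I anticipate is executing this biorthogonal refinement cleanly, since the naive choice of the family $\{(k_i,l_i)\}$ extracted from $\cv(1)$ need not already be biorthogonal and the merging step above only controls the second coordinate; everything else in the argument is routine unpacking of the rigidity data.
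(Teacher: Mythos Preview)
Your outline starts along the same lines as the paper --- write $\cv(1)$ as a family $\{(k_i,l_i)\}$, apply the snake identity to express every $l$ as a join of some $l_i$'s, and aim for biorthogonality $\ev(l_j\tn k_i)=1\Leftrightarrow i=j$ --- but the part you label ``the main technical obstacle'' is in fact the whole content of the lemma, and your proposed ``Zorn-type minimality'' refinement does not work on its own. If you simply pass to the minimal members of $B$, you have no guarantee that these still generate $\ct{L}$: condition~(A) could be destroyed by the refinement unless you already know that every $b\in B$ is a join of minimal members, and that is precisely a form of the biorthogonality you are trying to establish. Relatedly, your intermediate claim $l=\bigvee\{b\in B:b\leq l\}$ is not justified at that stage; it needs $\ev(b\tn k_b)=1$ for every $b$, which again is part of the missing biorthogonality.

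The paper closes this gap without any minimality or choice argument, by exploiting the \emph{symmetry} of $\SL$: since the monoidal structure is symmetric, the flipped pair $(\ev',\cv')$ exhibits $\ct{M}$ as a \emph{left} dual of $\ct{L}$ as well. Running the two snake identities in both the right-dual and left-dual directions gives, whenever $\ev(l_j\tn m_i)=1$, the inequalities $l_j\leq l_i$, $m_i\leq m_j$ from one direction and $l_i\leq l_j$, $m_j\leq m_i$ from the other, forcing $l_i=l_j$ and $m_i=m_j$. That is the biorthogonality you need, and once you have it condition~(B) follows immediately (if $l_j\leq l_i$ then $\ev(l_i\tn m_j)\geq\ev(l_j\tn m_j)=1$, hence $i=j$). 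So the missing idea is not a refinement of the family but the use of the symmetric structure to get the reverse snake identities.
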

\begin{proof} Assume $\ct{M}$ is the right dual of $\ct{L}$ with duality morphisms $\ev : \ct{L}\tn \ct{M}\rightarrow \ct{P}(1)$ and $\cv : \ct{P}(1)\rightarrow \ct{M}\tn \ct{L} $. Let $\cv (1)= \lbrace (m_{i},l_{i})\mid i\in I\rbrace$. By the duality axioms e.g. $(\ev\tn\id_{\ct{L}})(\id_{\ct{L}}\tn \cv )=  \id_{\ct{L}} $, we can conclude that for any $l\in \ct{L}$, there exists a subset $J_{l}\subseteq I$ such that $\vee_{i\in J_{l}}l_{i} =l $ and $\ev \lbrace (l,m_{i})\rbrace = 1$ iff $ i\in J_{l}$. In particular, for a fixed $i\in I$, if $j\in J_{l_{i}} $, then $l_{j}\leq l_{i}$ and $(l_{i},m_{i})\geq (l_{j},m_{i})$. Hence, because $\ev$ is join-preserving, $\ev(l_{i},m_{i})= \ev(l_{j},m_{i}) =1$ for any $i\in I$. Additionally, $(\id_{\ct{M}}\tn \ev )(\cv\tn\id_{\ct{M}})=  \id_{\ct{M}} $, shows that $m_{i}\leq m_{j}$ if $\ev \lbrace (l_{j},m_{i})\rbrace = 1$.

By the symmetric structure of $\SL$, we can deduce that morphisms $\ev' : \ct{M}\tn \ct{L}\rightarrow \ct{P}(1)$ and $\cv' : \ct{P}(1)\rightarrow \ct{L}\tn \ct{M} $, defined by $\cv'(1)= \lb{l_{i}}{m_{i}}$ and $\ev' \lb{m}{l}  = \ev \lb{l}{m}$ make $\ct{M}$ the left dual of $\ct{L}$. Hence, if $\ev' \lb{m_{i}}{l_{j}}=1$, the same arguments as above show that $m_{j}\leq m_{i}$ and $l_{i}\leq l_{j}$. Therefore, $\ev(l_{j},m_{i})=1$ iff $i=j$.

Observe that if for $i,j\in I$, if $l_{j}\leq l_{i}$, then $\lb{l_{j}}{m_{j}}\leq \lb{l_{i}}{m_{j}}$ and the same line of arguments as above follow demonstrating that $i=j$. Hence, elements $l_{i}$ form a basis for $\ct{L}$.
\end{proof}

Let us briefly reflect on the fact that YBE solutions on rigid objects of $\SL$ correspond to set-theoretical YBE solutions. Of course the strong monoidal functor $\ct{P}:\mathrm{Set}\rightarrow \SL$ lifts the set-theoretical YBE solutions to YBE solutions on free lattices. Furthermore, an invertible morphism between lattices with bases must send basis elements to basis elements. Hence, a YBE solution on a lattice $\ct{L}$ with a basis is fully determined by a YBE solution on its basis. 
 
\section{Hopf Algebras in $\SL$}\label{SHpfSL}
In this section, we introduce the remnant group of a Hopf algebra in $\SL$ and show that any co-quasitriangular Hopf algebra gives rise to a skew brace. 

As pointed out earlier, for a lattice $\ct{L}$, the lattice $\ct{L}\tn\ct{L}$ can be viewed as a quotient of $\ct{P}(\ct{L}\times \ct{L})$. Hence, any element of can be written as $\cup_{i\in I}\lb{l_{i}}{l'_{i}}$ for some elements $l_{i},l'_{i}\in\ct{L}$. Therefore, we will use the shorter notation $\vee_{i\in I}(l_{i},l'_{i})$, instead of $\cup_{i\in I}\lb{l_{i}}{l'_{i}}$, unless otherwise stated.
\subsection{Remnant of a Hopf Algebra}\label{SRem}
The category $\SL$ has a symmetric monoidal structure via $\tn$, and thereby one can formulate the notion of a Hopf algebra in $\SL$. 

Let $\ct{H}$ be a complete lattice in $\SL$. An \emph{algebra} structure on $\ct{H}$, consists of a join-preserving morphism $m:\ct{H}\tn\ct{H}\rightarrow \ct{H}$ which is associative i.e. $m(m\tn \id_{\ct{H}})= m(\id_{\ct{H}}\tn m)$ and an element denoted by $1\in \ct{H}$, such that $m((1,h))=h=m((h,1))$, for all elements $h\in \ct{H}$. We will denote the multiplication of pairs $(h,h')\in \ct{H}\tn\ct{H} $, by $m(h,h')$ or $h.h'$ instead of $m((h,h'))$. 
\begin{rmk}\label{RMult} We should point out that although the multiplication is determined by its value on pairs of the form $(h,h')\in \ct{H}\tn\ct{H}$, it does not correspond to a monoid structure on the underlying set $\ct{H}$, but a stronger structure depending on the partial ordering, since $\vee_{i\in I} h.h_{i}= h. (\vee_{i\in I} h_{i})$ and $\vee_{i\in I}h_{i}.h=(\vee_{i\in I}h_{i}).h$ must hold.
\end{rmk} 

A \emph{coalgebra} structure on $\ct{H}$ consists of join-preserving morphisms $\Delta : \ct{H} \rightarrow \ct{H} \tn \ct{H} $, $\epsilon:\ct{H}\rightarrow \ct{P}(1)$ satisfying 
\begin{align*}
(\Delta\tn \id_{\ct{H}}) \Delta = (\id_{\ct{H}}\tn \Delta) \Delta, \quad (\epsilon\tn\id_{\ct{H}})\Delta = \id_{\ct{H}} = (\id_{\ct{H}}\tn \epsilon)\Delta
\end{align*}
For $h\in \ct{H}$, $\Delta (h)$ consists of the image of a subset of $\ct{H}\times \ct{H}$ in $\ct{H}\tn \ct{H}$. Hence, we adapt \emph{Sweedler's notation} from ordinary Hopf algebras and write $\Delta(h)=\lb{h_{(1)}}{h_{(2)}}$, where $(h_{(1)},h_{(2)})$ represent all the pairs appearing in $\Delta (h)$ and in fact $\lb{h_{(1)}}{h_{(2)}}=\vee \lbrace (a,b)\mid (a,b)\in \Delta (h)\rbrace$. Consequently, we can write
$$h= \vee \lbrace h_{(1)}\mid \epsilon (h_{(2)})= 1\rbrace = \vee \lbrace h_{(2)}\mid \epsilon (h_{(1)})= 1\rbrace $$
A \emph{bialgebra} structure on $\ct{H}$ consists of an algebra and coalgebra structures as above with additional compatibility conditions $\Delta (1)=(1,1)$, $\epsilon (1)=1 $, $\epsilon (m(h,h'))= 1$ iff $\epsilon (h)=\epsilon (h')=1$ and 
\begin{equation}\label{Ebialg} \lb{h_{(1)}.h'_{(1)}}{h_{(2)}.h'_{(2)}}= \lb{(h.h')_{(1)}}{(h.h')_{(2)}}
\end{equation}
We call $\ct{H}$ a \emph{Hopf algebra}, if there exists an invertible join-preserving morphism $S:\ct{H}\rightarrow\ct{H} $, such that $\vee h_{(1)}.S(h_{(2)})= \vee S(h_{(1)}).h_{(2)}$ is equal to $1$ iff $\epsilon (h)=1$ and equal to $\emptyset$ otherwise. 

\begin{ex}\label{EHpfGrpAlg} Since the power-set functor $\Pl{-}: \mathrm{Set}\rightarrow \SL$ is strong monoidal, the image of any Hopf algebra in $\mathrm{Set}$ i.e. $\Pl{G}$ for any group $G$, will have an induced Hopf algebra structure. Here, we refer to $\Pl{G}$ as the \emph{group algebra} of $G$. The induced Hopf algebra structure is defined by $\st{g}.\st{h}=\st{gh}$, $S(\st{g})=\st{g^{-1}}$, $1=\st{e}$ and $\epsilon (\st{g})=1$, where $g,h\in G$ and $e\in G$ is the identity element. Since the singleton sets $\st{g}$, where $g\in G$, form a basis of $\Pl{G}$, it is sufficient to define the structure on the basis elements. 
\end{ex}

\begin{ex}\label{EHpfFunctAlg} As demonstrated in Lemma \ref{LSLRig}, free lattices are rigid objects and in fact self-dual. Hence, we can provide a dual Hopf algebra structure on $\Pl{G}$ for a group $G$. We will refer to this structure as the \emph{function algebra}, which as for ordinary Hopf algebras, is defined by $\st{g}.\st{h} =\st{g}$ iff $g=h$, $S(\st{g})=\st{g^{-1}}$, $\Delta(\st{g})= \st{(h,l)\mid h.l=g} $, $1=\st{g\mid g\in G}$ and $\epsilon (\st{g})=1$ iff $g=1$.
\end{ex}

\begin{ex}\label{EDistHpf} Any complete lattice $\ct{L}$ which satisfies the following distributivity conditions $\vee_{i\in I} (a_{i}\wedge b)= (\vee_{i\in I} a_{i})\wedge b $ and $\vee_{i\in I} (b\wedge a_{i})= b\wedge (\vee_{i\in I} a_{i}) $, for $a_{i}, b\in \ct{L}$, obtains a natural Hopf algebra structure defined by $1=\vee \ct{L}$, $m(a,b)=a\wedge b$, $S=\id_{\ct{L}}$, $\epsilon(a)=1 $ iff $a= \vee\ct{L}$ and $\Delta (a )= \lb{a}{1}\vee \lb{1}{a}$. 	
\end{ex}

Let $\ct{H}$ be a Hopf algebra in $\SL$. We first observe that $\epsilon^{-1}(\emptyset)$ is a complete sublattice of $\ct{L}$. Hence, we can consider the parallel pair $\emptyset,\mathrm{inc.}:\epsilon^{-1}(\emptyset) \rightrightarrows \ct{H} $, where $\emptyset$ denotes the morphisms which sends all elements to $\emptyset\in \ct{H}$ and $\mathrm{inc.}$ denotes the natural inclusion morphism. We denote the coequalizer of this pair by $\pi :\ct{H}\rightarrow \ct{Q}$. Recall from the description of colimits in Section \ref{SSL}, that as a set $\ct{Q}$ is the quotient of $\ct{H} $ by relation $  h=\vee\big(\lbrace h \rbrace \cup \epsilon^{-1}(\emptyset)\big) $, and the morphism $\pi$ is defined by $\pi ( h)= \ov{h}$. If we denote $D:=\vee\epsilon^{-1}(\emptyset )$, then $\vee\big(\lbrace h \rbrace \cup \epsilon^{-1}(\emptyset)\big) = h\vee D$. Since the closure operator on $\ct{H}$, defined by $h\mapsto h\vee D$ is join-preserving, it follows that $\pi$ admits a well-defined join-preserving section $\iota :\ct{Q}\rightarrow \ct{H}$, defined by $\iota (\ov{h})=h\vee D$.

\begin{lemma}\label{LQuotHpf} The quotient lattice, $\ct{Q}$ of a Hopf algebra $H$, as described above has an induced Hopf algebra structure, such that $\pi$ becomes a Hopf algebra morphism i.e. $\pi$ commutes with all structural morphisms. 
\end{lemma}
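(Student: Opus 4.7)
The plan is to verify that every structural morphism of $\ct{H}$ factorises through $\pi$, yielding morphisms on $\ct{Q}$ that automatically satisfy the Hopf algebra axioms because $\pi$ intertwines the two structures by construction. First I would observe that $\epsilon$ is join-preserving, so $D=\vee\epsilon^{-1}(\emptyset)$ itself lies in $\epsilon^{-1}(\emptyset)$; hence $\epsilon^{-1}(\emptyset)$ is the principal down-set $\st{h\mid h\leq D}$, and $\pi(h)=\pi(h')$ iff $h\vee D=h'\vee D$. By the universal property of the coequalizer, a join-preserving morphism $f:\ct{H}\to X$ factorises through $\pi$ iff $f(d)=\emptyset_{X}$ for every $d\leq D$.

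The algebra, unit, and counit descend directly. The bialgebra condition $\epsilon(h.k)=1$ iff $\epsilon(h)=\epsilon(k)=1$ shows that $\epsilon(d.k)=\epsilon(k.d)=\emptyset$ whenever $\epsilon(d)=\emptyset$, so $\epsilon^{-1}(\emptyset)$ absorbs multiplication from either side; join-preservation of $m$ then upgrades this to $D.k\leq D$ and $k.D\leq D$ for all $k\in\ct{H}$, hence $(h\vee D).(k\vee D)\vee D=h.k\vee D$, which lets me define $m_{\ct{Q}}(\ov{h},\ov{k}):=\ov{h.k}$ unambiguously. Setting $1_{\ct{Q}}:=\ov{1}$ (well-defined and non-bottom since $\epsilon(1)=1$) and $\epsilon_{\ct{Q}}(\ov{h}):=\epsilon(h)$ (well-defined since $\epsilon$ vanishes on $\epsilon^{-1}(\emptyset)$) deals with the remaining easy pieces.

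The main obstacle is the comultiplication. I would show $(\pi\tn\pi)\Delta(d)=\emptyset$ in $\ct{Q}\tn\ct{Q}$ for every $d\leq D$, so that the universal property of the coequalizer yields a unique $\Delta_{\ct{Q}}$ with $\Delta_{\ct{Q}}\pi=(\pi\tn\pi)\Delta$. Writing $\Delta(d)=\vee(d_{(1)},d_{(2)})$ and invoking the counit axiom $(\id_{\ct{H}}\tn\epsilon)\Delta=\id_{\ct{H}}$ gives $d=\vee\st{d_{(1)}\mid\epsilon(d_{(2)})=1}$; hence every generating pair $(d_{(1)},d_{(2)})$ in $\Delta(d)$ satisfies either $\epsilon(d_{(2)})=1$, forcing $d_{(1)}\leq d\leq D$, or $\epsilon(d_{(2)})=\emptyset$, forcing $d_{(2)}\leq D$. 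In either case the generator $(\pi(d_{(1)}),\pi(d_{(2)}))$ collapses to $\emptyset$ in $\ct{Q}\tn\ct{Q}$ by the tensor-product relation $\lb{\emptyset}{b}=\lb{a}{\emptyset}=\emptyset$ (the empty join of generators), and taking joins over all such pairs proves the claim.

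For the antipode I would invoke the standard identity $\epsilon\circ S=\epsilon$, which holds for any Hopf algebra in a symmetric monoidal category and is easily rederived in $\SL$ from $\epsilon\circ m=\epsilon\tn\epsilon$, the antipode axiom $m(S\tn\id_{\ct{H}})\Delta=\eta\epsilon$, and $\epsilon\circ\eta=\id_{\Pl{1}}$; thus $S$ preserves $\epsilon^{-1}(\emptyset)$, making $S_{\ct{Q}}(\ov{h}):=\ov{S(h)}$ well-defined, and its invertibility is inherited from that of $S$. Finally, the Hopf algebra axioms on $\ct{Q}$ transfer from those on $\ct{H}$ by composing each axiom with the appropriate tensor power of $\pi$ on the outside, using that $\pi$ is epic in $\SL$ and that $\tn$ preserves such epimorphisms componentwise.
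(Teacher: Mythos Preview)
Your proof is correct and follows essentially the same route as the paper: both arguments hinge on the three facts that $\epsilon^{-1}(\emptyset)$ absorbs multiplication on either side, that each generator of $\Delta(d)$ has at least one leg in $\epsilon^{-1}(\emptyset)$ (via the counit axiom), and that $\epsilon S=\epsilon$. The only cosmetic difference is that the paper exploits the explicit join-preserving section $\iota(\ov{h})=h\vee D$ to \emph{define} the quotient structure maps as $\pi\,(\text{map})\,\iota^{\tn k}$ and then checks the axioms directly, whereas you package the same computations as a factorisation through the coequalizer and finish by invoking that $\pi^{\tn k}$ is epic (which holds because $\SL$ is closed monoidal); both are equivalent bookkeepings of the same verification.
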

\begin{proof} We define the Hopf algebra structure on $\ct{Q}$ denoted by $(m',\ov{1},\Delta', \epsilon',S')$ via 
\begin{align*}
m'=\pi m (\iota\tn\iota), \quad \Delta'= (\pi\tn\pi)\Delta\iota, \quad \epsilon'=\epsilon \iota, \quad S'=\pi S\iota
\end{align*}
Since $\iota\pi=\id_{\ct{Q}}$, then we only need to show that the defined morphisms induce a Hopf algebra structure. Checking that $m'(h,g)=\pi m(h\vee D, g\vee D) $ is associative, depends on showing that $\pi (h.(g\vee D))=\pi (h.g)= \pi ((h\vee D). g)$, for any $h,g\in H$. This follows from the fact that $h.D\in \epsilon^{-1}(\emptyset)$ and $\pi (h.(g\vee D))= \pi (h.g) \vee\pi ( h.D)  $. Consequently, $\pi ((1\vee D). (h\vee D))= \ov{h}= \pi ((h\vee D).(1\vee D))$ and $m',\ov{1}$ form an associative algebra structure on $\ct{Q}$.

Similarly, showing that $\Delta (h)= \lb{\pi (h_{(1)})}{\pi (h_{(2)})}$ is coassociative depends on showing that $(\pi\tn \pi)\Delta (h\vee D)= (\pi\tn \pi)\Delta (h) $. This fact follows from the observation that $\Delta (h\vee D)= \lb{h_{(1)}}{h_{(2)}}\vee \lb{D_{(1)}}{D_{(2)}} $ and $\lb{D_{(1)}}{D_{(2)}} \in \mathrm{Im}(\ct{H}\tn\epsilon^{-1}(\emptyset ) \cup \epsilon^{-1}(\emptyset)\tn\ct{H})\subset \ct{H}\tn\ct{H}$, since $\epsilon =(\epsilon \tn \epsilon )\Delta$. Note that $\epsilon'(\ov{h})= \epsilon (h)$ and it follows immediately that $\Delta'(\ov{h})= \lb{\ov{h_{(1)}}}{\ov{h_{(2)}}} $ defines a coalgebra structure on $\ct{Q}$.

The bialgebra axioms follow directly from our observations in the paragraph above, that $\pi (h.(g\vee D))=\pi (h.g)= \pi ((h\vee D). g)$ and $(\pi\tn \pi)\Delta (h\vee D)= (\pi\tn \pi)\Delta (h) $ and is left to the reader. Note that $S'(\ov{h})= \ov{S(h)}$, since $\epsilon S= \epsilon$ and thereby $S(D)\in \epsilon^{-1}(\emptyset )$. The Hopf algebra axioms then follow directly from this observation.
\end{proof}

As we noted in the proof of Lemma \ref{LQuotHpf}, $\epsilon'(\ov{h})=\epsilon (h)$ and thereby $\epsilon'(\ov{h})=\emptyset$ iff $\ov{h}=\emptyset$. We can further characterise such Hopf algebras. In what follows we will call an element $q\in \ct{Q}$ \emph{group-like}, if $\Delta (q)= \st{(q,q)}$. 
\begin{lemma}\label{LHpfGrp} Given a Hopf algebra structure on a complete lattice $\ct{Q}$, TFAE
\begin{enumerate}[(I)]
\item For any $\emptyset\neq q\in \ct{Q}$, $\epsilon ( q)= 1$. 
\item	 There exists a set $X$, such that $\ct{Q}\cong \Pl{X}$ as a lattice and a group structure on $X$ inducing a group algebra structure on $\ct{Q}$.
\end{enumerate}	
\end{lemma}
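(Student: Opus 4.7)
The direction (II) $\Rightarrow$ (I) is immediate: in the group algebra $\Pl{X}$ of Example \ref{EHpfGrpAlg}, every non-empty element is a join of singletons $\{g\}$, and since $\epsilon(\{g\}) = 1$ for all $g \in X$, join-preservation of $\epsilon$ forces the counit to be $1$ on every non-empty element.

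For (I) $\Rightarrow$ (II), the plan is to identify the desired set as $X := \{x \in \ct{Q} : x \neq \emptyset,\ \Delta(x) = \lb{x}{x}\}$, the set of group-like elements, and show $\ct{Q} \cong \Pl{X}$ as a lattice, with the Hopf structure matching the group algebra. First I would check that $X$ is a group under the multiplication of $\ct{Q}$, with identity $1$ and inverse $S$: closure under multiplication follows from the bialgebra axiom \ref{Ebialg} combined with $\epsilon(xy) = 1$ (hence $xy \neq \emptyset$) by (I); closure under $S$ follows from the identity $\Delta S = (S \tn S)\tau\Delta$ together with the invertibility of $S$ as a lattice morphism; and the antipode axiom specializes on a group-like $x$ to $xS(x) = 1 = S(x)x$.

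The crux of the argument is to show that $1$ is an atom of $\ct{Q}$. If $\emptyset \neq q \leq 1$, then $\epsilon(q) = 1$ by (I), so the antipode axiom yields $\vee q_{(1)} S(q_{(2)}) = 1$, and this expression is bounded above by $qS(q)$ via the bimorphism property of $m$ (Remark \ref{RMult}), giving $1 \leq qS(q)$. On the other hand $q \leq 1$ forces $qS(q) \leq S(q) \leq S(1) = 1$, so $S(q) = 1 = S(1)$; since $S$ is a lattice automorphism, $q = 1$. The same reasoning translated by an arbitrary group-like $x \in X$ shows that each $x$ is also an atom: for $\emptyset \neq q \leq x$ the element $qS(x)$ is non-empty by (I) and bounded by $xS(x) = 1$, hence equal to $1$ by atomicity of $1$, giving $q = qS(x)x = x$. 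In particular, distinct group-likes are incomparable, establishing condition (B) of Definition \ref{DBasis}.

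The remaining task is to show that every non-empty $q \in \ct{Q}$ is a join of elements of $X$, giving condition (A) and hence $\ct{Q} \cong \Pl{X}$. The idea is that applying $m(\id \tn S)$ to a non-empty sub-pair $(a,b) \leq \Delta(q)$ yields a non-empty element $aS(b) \leq 1$, which by atomicity of $1$ must equal $1$; the symmetric antipode axiom likewise gives $S(a)b = 1$. Applying $\Delta$ to these identities and using the multiplicativity of $\Delta$ propagates analogous relations to arbitrary non-empty sub-pairs of $\Delta(a)$ and $\Delta(b)$, and an iterative argument based on coassociativity and the anti-coalgebra property of $S$ then forces $\Delta(a) = \lb{a}{a}$ with $a = b$, so each sub-pair of $\Delta(q)$ has the form $\lb{x}{x}$ with $x \in X$ and the counit identity $q = \vee a$ presents $q$ as a join of group-likes. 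Once this is established, the lattice isomorphism $\ct{Q} \cong \Pl{X}$ transports the Hopf structure of $\ct{Q}$ onto the group algebra structure of Example \ref{EHpfGrpAlg} by direct inspection. The main obstacle is this final iterative step, requiring careful tracking of how the antipode relations propagate through sub-pairs and collapse the coproduct of each sub-pair to the group-like form.
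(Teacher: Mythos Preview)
Your overall strategy matches the paper's: show $1$ is an atom, show group-like elements are atoms, and then argue that every non-empty element decomposes as a join of group-likes. Your observation that non-empty products are non-empty directly from the multiplicativity of $\epsilon$ (via the bialgebra axiom $\epsilon(h.h')=1$ iff $\epsilon(h)=\epsilon(h')=1$, together with (I)) is cleaner than the paper's route, which deduces this only after producing group-like elements (its Claim~5).

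The place you correctly flag as ``the main obstacle'' is exactly where the paper does the detailed work. Your claim that every non-empty sub-pair $(a,b)\leq\Delta(q)$ satisfies $aS(b)=1=S(a)b$ is valid, since $m(\id\tn S)$ is join-preserving and sends $\Delta(q)$ to $1$, and this is in fact stronger than what the paper first extracts. But deducing from this that $a=b$ is group-like is not an iterative formality: the relations $aS(b)=1$ and $S(a)b=1$ only give one-sided inverses, and one needs an extra step to upgrade to two-sided inverses before the bialgebra identity $\lb{a_{(1)}.b_{(1)}}{a_{(2)}.b_{(2)}}=\lb{1}{1}$ becomes available. The paper handles this in its Claims~2 and~3: Claim~2 uses the opposite antipode relation together with atomicity of $1$ to force $S^{-1}(b)=S(b)$ and hence $S(b).a=1$; Claim~3 then applies $\Delta$ to a genuine two-sided relation $a.b=1=b.a$, using $\Delta(1)=(1,1)$ and atomicity to collapse $\Delta(a)$ to diagonal form. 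Your sketch does not indicate how the two-sided inverse is obtained, and without it the collapse step has no starting point.

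You also omit the final uniqueness step: once every $q$ is a join of group-likes $q=\vee_{i}a_i$, one must still check that any group-like $b\leq q$ already equals some $a_i$, which the paper extracts from $\lb{b}{b}\leq\vee_i\lb{a_i}{a_i}$ in $\ct{Q}\tn\ct{Q}$ and minimality of the $a_i$. Without this you have a basis in the sense of Definition~\ref{DBasis} but not yet the isomorphism $\ct{Q}\cong\Pl{X}$.
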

\begin{proof} $(I)\Rightarrow (II)$ We will prove this statement in several steps. 

Assume that for any $\emptyset\neq q\in \ct{Q}$, $\epsilon ( q)= 1$. First, note that for any $\emptyset\neq q\in \ct{Q}$, the coalgebra structure implies that $\vee \st{q_{(1)}}= q=\vee \st{q_{(2)}} $, so that for any pair $(q_{(1)}, q_{(2)} )\in \Delta (q)$, we have $q_{(1)}, q_{(2)}\leq q$. Secondly, we observe that $\vee \st{q_{(1)}.S(q_{(2)})}= 1$ and consequently $1\leq q.S(q) $.   

\textbf{\textit{Claim 1.}} If $\emptyset\neq q\leq 1$, then $q=1$.

Since $1\leq q.S(q)\leq q.S(1)\leq 1.S(1)=1 $ and $S(1)=1$, then $q=1$.

\textbf{\textit{Claim 2.}} For any $\emptyset\neq q\in\ct{Q}$, there exists at least one pair pair $(a,b)\in \lb{q_{(1)}}{q_{(2)}}$, such that $a.S(b)=1=S(b).a$. 

Since $\vee q_{(1)}.S(q_{(2)})= 1 $, then by Claim 1 it follows immediately that there exists a pair $(a,b)\in \lb{q_{(1)}}{q_{(2)}}$ such that $a.S(b)=1$. Consequently, by associativity we conclude that $p.a\neq \emptyset$ iff $p\neq \emptyset$. Since $(b,a)\in \lb{q_{(2)}}{q_{(1)}} $, thereby $S^{-1}(b).a\leq 1$. Hence, $S^{-1}(b).a= 1 $ and by associativity it follows that $S^{-1}(b)=S(b)$. A symmetric argument also shows that $S(a).b=1=b.S(a)$ and $S^{-1}(a)=S(a)$.

\textbf{\textit{Claim 3.}} If $a.b=1=b.a$, then $\lb{a_{(2)}}{a_{(1)}}=\lb{a}{a} $. 

The bialgebra axiom, \ref{Ebialg}, implies that $\lb {a_{(1)}.b_{(1)}}{a_{(2)}.b_{(2)}} =\lb{1}{1}$. Hence, for any $(a',b')\in \lb{a_{(1)}}{b_{(1)}}$, we have $a'.b' =1$ or $\emptyset$. On the other hand, Claim 2 shows that there exists a pair $(a',a'')\in \lb{a_{(1)}} {a_{(2)}} $ such that $S(a'').a'=1=S(a').a''$. Therefore, for any $(b',b'') \in \lb{b_{(1)}} {b_{(2)}} $, $(a'.b',a''.b'')=(1,1)$ and $\lb{b_{(1)}} {b_{(2)}} = \st{(S(a''),S(a'))}$. 
Since $b= \vee \st{b_{(1)}}=\vee \st{b_{(2)}}$, then $b= S(a')$ and $a'=a''$. By a symmetric argument, we see that $\Delta(a)=\lb{a'}{a'} $ and $a'=a=a''$.

So far we have shown that for every element $\emptyset\neq q\in \ct{Q}$, there exists a group-like element $a$, such that $(a,a) \in \lb{q_{(1)}}{q_{(2)}} $. Additionally, the multiplication on $\ct{Q}\tn\ct{Q}$ restricts to the set of group-like elements, $\mathrm{Grp}(\ct{Q})$, in $\ct{Q}$. 

\textbf{\textit{Claim 4.}} If $\emptyset\neq a\in \mathrm{Grp}(\ct{Q}) $ is group-like and $\emptyset\neq q\leq a$, then $q=a$.

The same argument as Claim 1, can be used $1\leq q.S(q)\leq q.S(a)\leq a.S(a)=1 $. By symmetric arguments, we conclude that $q$ is the inverse of $S(a)$, and by associativity $a=q$.

\textbf{\textit{Claim 5.}} If $\emptyset\neq q,p\in\ct{Q}$, and $q.p\neq \emptyset$.

By Claim 2, we see that there exist group-like elements $a,b\in\ct{Q}$ such that $a\leq q$ and $b\leq p$. Hence $\emptyset\lneq a.b \leq q.p$. 

Since for an arbitrary $\emptyset\neq q\in \ct{Q}$, and $(a,b)\in \lb{q_{(1)}}{q_{(2)}}$ $a.S(b)=1$ or $\emptyset$. By Claims 5 and 2, we conclude that $\Delta (q)= \vee_{i\in I} \lb{a_{i}}{a_{i}}$ for a set of group-like elements $a_{i}$ such that $q= \vee_{i\in I} a_{i}$. Furthermore, the set of group-like elements forms a basis for $\ct{Q}$. What remains to show is that the factorisation $q= \vee_{i\in I} a_{i}$ is unique. If $b$ is a group-like element such that $b\leq q$, then $\Delta (b)=(b,b)\leq \vee_{i\in I} \lb{a_{i}}{a_{i}} $. Since the group-like elements are minimal, by the lattice structure of $\ct{Q}\tn \ct{Q}= \Pl{\ct{Q}\times \ct{Q}}/ \sim$, we can conclude that the statement only holds if there exists an $i\in I$ such that $b=a_{i}$. Hence, $\ct{Q}\cong \Pl{\mathrm{Grp}(\ct{Q})}$.
 
$(II)\Rightarrow (I)$ is true by the definition of the Hopf algebra structure on the group algebra $\Pl{X}$.\end{proof}

\begin{corollary}\label{CoRem} Any Hopf algebra $\ct{H} $ in $\SL$, we has a corresponding group $R(\ct{H})$ such that the quotient Hopf algebra $\ct{H}/\epsilon^{-1}(\emptyset )$ is isomorphic to the induced group algebra of $R(\ct{H})$. We call $R(\ct{H})$ the \emph{remnant} of $\ct{H} $.   
\end{corollary}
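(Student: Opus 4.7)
The plan is to obtain the corollary by simply composing Lemma \ref{LQuotHpf} with Lemma \ref{LHpfGrp}. First, I would apply Lemma \ref{LQuotHpf} to the given Hopf algebra $\ct{H}$, producing the quotient Hopf algebra $\ct{Q}:=\ct{H}/\epsilon^{-1}(\emptyset)$ together with the quotient morphism $\pi : \ct{H} \to \ct{Q}$, which is a morphism of Hopf algebras in $\SL$. The induced counit on $\ct{Q}$ is given by $\epsilon'(\ov{h}) = \epsilon(h\vee D)$, where $D=\vee \epsilon^{-1}(\emptyset)$.

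The crucial step is verifying that $\ct{Q}$ satisfies hypothesis (I) of Lemma \ref{LHpfGrp}, namely that $\epsilon'(\ov{q})=1$ for every $\emptyset \neq \ov{q} \in \ct{Q}$. Since $\epsilon$ is join-preserving, $\epsilon(D)=\vee \epsilon\bigl(\epsilon^{-1}(\emptyset)\bigr)=\emptyset$, so $\epsilon'(\ov{h}) = \epsilon(h)\vee \epsilon(D) = \epsilon(h)$. Now $\ov{h}\neq \emptyset$ in $\ct{Q}$ means precisely that $h\notin \epsilon^{-1}(\emptyset)$ (equivalently, $h\vee D \neq D$), which forces $\epsilon(h)\neq \emptyset$; since the codomain of $\epsilon$ is $\ct{P}(1)=\st{\emptyset,1}$, the only remaining value is $1$, as required.

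Having verified the hypothesis, Lemma \ref{LHpfGrp} produces a set $X$ with a group structure and a Hopf algebra isomorphism $\ct{Q} \cong \Pl{X}$ identifying $\ct{Q}$ with the group algebra of $X$. I would then simply define $R(\ct{H}) := X$, which (from the proof of Lemma \ref{LHpfGrp}) is canonically the set $\mathrm{Grp}(\ct{Q})$ of group-like elements of $\ct{Q}$. There is no real obstacle beyond the short verification above; both Lemmas \ref{LQuotHpf} and \ref{LHpfGrp} do all of the work, and the corollary is essentially a naming convention that packages them together.
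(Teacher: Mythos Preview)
Your proposal is correct and follows exactly the paper's approach: the corollary is stated without proof in the paper because it is an immediate consequence of Lemmas \ref{LQuotHpf} and \ref{LHpfGrp}, together with the observation (made just before Lemma \ref{LHpfGrp}) that $\epsilon'(\ov{h})=\epsilon(h)$ so that $\epsilon'(\ov{h})=\emptyset$ iff $\ov{h}=\emptyset$. Your verification of this fact via $\epsilon(D)=\emptyset$ and the equivalence $\ov{h}=\emptyset \Leftrightarrow h\leq D \Leftrightarrow \epsilon(h)=\emptyset$ is precisely what is needed.
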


\section{Co-quasitriangular Hopf algebras and Skew Braces}\label{SCQHASkw}
In this section, we show that the remnant of a co-quasitriangular Hopf algebra has an induced braiding operator. Consequently, we show that the secondary product $\star$ on a such a group agrees with the restriction of the transmuted product of the Hopf algebra to its remnant. 

Before we prove our results, we review the definitions of co-quasitriangular structures on Hopf algebras in $\SL$ and present Majid's transmutation theory in this case. Recall from the last section, that we are simply translating result on Hopf algebras in symmetric categories to the setting of $\SL$. But in doing so, our equations will look essentially the same as the ordinary Hopf algebra case, where instead of a implicit sum $a_{(1)}\tn a_{(2)}\in H\tn H$, we have an implicit supremum $\lb{a_{(1)}}{a_{(2)}}\in \ct{H}\tn\ct{H}$. In particular, for an arbitrary morphism $f:\ct{H}\tn\ct{H}\rightarrow \ct{L}$, by $f(a_{(1)}, a_{(2)}) $, we always mean $\vee\st{f(a_{(1)}, a_{(2)}) \mid (a_{(1)}, a_{(2)})\in \Delta (a)} $. 

We call a join-preserving morphism $\ct{R}:\ct{H}\tn\ct{H}\rightarrow \Pl{1}$ a \emph{co-quasitriangular} structure on a Hopf algebra $\ct{H}$, if the below conditions hold:
\begin{enumerate}[(A)]
\item $\ct{R}$	is convolution-invertible i.e., there exists a join-preserving morphism $\ct{R}^{-1} :\ct{H}\tn\ct{H}\rightarrow \Pl{1}$ satisfying 
\begin{equation}\label{EConvInv}
\Rr^{-1} (a_{(1)},b_{(1)}). \Rr (a_{(2)},b_{(2)}) =\epsilon (a).\epsilon (b)= \Rr (a_{(1)},b_{(1)})  . \Rr^{-1}(a_{(2)},b_{(2)}) \end{equation}
for any pair of elements $a,b\in \ct{H}$, where $.$ here signifies the natural monoid structure on $\Pl{1}$, defined by $1.0=0=0.1$, $1.1=1$ and $0.0=0$. 
\item For any $a,b,c\in\ct{H}$, the following equalities hold: 
\begin{align}
\Rr (a.b, c)&= \Rr(b,c_{(1)}).\Rr (a,c_{(2)})\label{ECQ1}
\\ \Rr (a,b.c)&= \Rr (a_{(1)},b).\Rr (a_{(2)}, c)\label{ECQ2}
\\ \Rr (b_{(1)}, a_{(1)})a_{(2)}&.b_{(2)}= b_{(1)}. a_{(1)} \Rr (b_{(2)} , a_{(2)})\label{ECQ3}
\end{align}
where in \ref{ECQ3}, we are using the natural action of $\Pl{1}$ on all lattices $\Pl{1}\tn \ct{L}\cong \ct{L}$, defined by $0.l=0$ and $1.l=l$.
\end{enumerate}

For any Hopf algebra, we can consider its category of \emph{left comodules}, denote by $\prescript{\ct{H}}{}{\SL}$, which has pairs $( \ct{L}, \delta: \ct{L}\rightarrow \ct{H}\tn \ct{L})$, satisfying $(\epsilon\tn\id_{\ct{L}}) \delta= \id_{\ct{L}} $ and $(\Delta\tn \id_{\ct{L}}) \delta=(\id_{\ct{H}}\tn \delta) \delta$, as objects and join-preserving morphisms which commute with the \emph{coactions}, $\delta$, appropriately as its morphisms. If we utilise notation $\delta (l)=\lb{l_{(0)}}{l_{(1)}}$, for elements $l\in \ct{H}$, so that $l_{(0)}\in \ct{L} $ and $l_{(1)}\in \ct{L}$, then the monoidal structure of $\SL$, lifts to the category of $\ct{H}$-comodules and the coaction on $\ct{L}\tn\ct{M}$ for arbitrary comodules $\ct{L}$ and $\ct{M}$ is defined by
$$\lb{(l,m)_{(0)}}{(l,m)_{(1)}}:= \lb{l_{0}.m_{(0)}}{(l_{(1)},m_{(1)})}$$
Consequently, when given a co-quasitriangular structure on $\ct{H}$, $\prescript{H}{}{\SL}$ is provided with an induce braiding $\psi$, defined by 
\begin{equation}
\psi_{\ct{L},\ct{M}}(l,m)= \Rr (l_{(0)}, m_{(0)}). (m_{(1)},l_{(1)})
\end{equation}
for arbitrary comodules $\ct{L}$ and $\ct{M}$.

\begin{rmk}\label{RCQHA} The equations, \ref{ECQ1}, \ref{ECQ2}, \ref{ECQ3}, described in the above definition	 are mirrors to the usual definition of co-quasitriangular Hopf algebras, as presented in Chapter 2 of \cite{majid2000foundations}. Notice, that this is because the definition we are presenting is used to provide a braiding on the category of left $\ct{H}$-comodules, while the description in \cite{majid2000foundations} is used for right comodules. Nevertheless, they are equivalent, since $\Rr^{-1}$ in our case will satisfy the axioms presented in \cite{majid2000foundations}.
\end{rmk}

It is known that any co-quasitriangular Hopf algebra has a well-defined braiding operator $\Gamma:\ct{H}\tn\ct{H}\rightarrow \ct{H}\tn\ct{H} $, defined by 
\begin{equation}\label{EBrdCQHA}
(a, b)\mapsto \Rr ( a_{(1)} , b_{(1)}). (b_{(2)},a_{(2)}) .\Rr^{-1} ( a_{(3)} , b_{(3)})
\end{equation}  	
for elements $a,b\in \ct{H}$, which make $(\ct{H},\Gamma)$ a braided object in $\SL$. Additionally, it follows from \ref{ECQ3}, that $\ct{H} $ is \emph{braided-commutative} with respect to this braiding i.e. $m\Gamma=m $ holds. It follows directly from \ref{ECQ1} and \ref{ECQ2} that 
\begin{align} 
\Gamma (a.b, c) = (\id_{\ct{H}}\tn m)&(\Gamma\tn \id_{\ct{H}})(a, \Gamma(b,c))\label{EGamma1}
\\\Gamma (a,b.c)=(m\tn \id_{\ct{H}}) &(\id_{\ct{H}}\tn \Gamma)(\Gamma(a,b),c)\label{EGamma2}
\end{align}
hold for $a,b,c\in \ct{H}$ and it should also be clear that $\Gamma(1,a)= (a,1)$ and $\Gamma(a,1)= (1,a)$. Consequently, we can show that $\Gamma$ restricts to a braiding operator on $R(\ct{H})$.

\textbf{Notation.} Since $R(\ct{H})$ is a subset of the quotient Hopf algebra, $\ct{Q}$, constructed in Lemma \ref{LQuotHpf}, we denote its elements by $\ov{a}$ for some $a\in \ct{H}$, where $\iota (\ov{a})=a\vee D$, with $D=\vee \epsilon^{-1}(\emptyset)$.
\begin{thm}\label{TRemCQHA} If $\ct{H} $ is a Hopf algebra in $\SL$ and $\Rr$ is a co-quasitriangular structure on $\ct{H} $, then the induced braiding, $\Gamma$, on $\ct{H} $ restricts to a braiding operator $r$ on the group $R(\ct{H})$.
\end{thm}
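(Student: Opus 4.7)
The plan is to descend the braiding $\Gamma$ through the Hopf-algebra quotient $\pi:\ct{H}\to\ct{Q}$ and then read off what the descended operator does on the basis $R(\ct{H})$ of $\ct{Q}\cong\Pl{R(\ct{H})}$. First I would check that $\Gamma$ respects $\pi$. Plugging in the definition of $\Gamma$ and using the convolution-inverse axiom \ref{EConvInv},
\begin{equation*}
(\epsilon\tn\epsilon)\Gamma(a,b) = \Rr(a_{(1)},b_{(1)}).\epsilon(b_{(2)}).\epsilon(a_{(2)}).\Rr^{-1}(a_{(3)},b_{(3)}) = \Rr(a_{(1)},b_{(1)}).\Rr^{-1}(a_{(2)},b_{(2)}) = \epsilon(a).\epsilon(b).
\end{equation*}
A brief verification shows $\ker(\pi\tn\pi)=\ker(\epsilon\tn\epsilon)$, so this identity forces $\Gamma$ to send $\ker(\pi\tn\pi)$ into itself, and hence to descend to a join-preserving operator $\Gamma':\ct{Q}\tn\ct{Q}\to\ct{Q}\tn\ct{Q}$. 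By Lemma \ref{LHpfGrp} and Corollary \ref{CoRem}, $\ct{Q}\tn\ct{Q}\cong\Pl{R(\ct{H})\times R(\ct{H})}$, whose atomic basis is $R(\ct{H})\times R(\ct{H})$.

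Second, I need to show $\Gamma'$ sends each basis pair $(\ov{a},\ov{b})$ to a single basis element, thereby defining $r(\ov{a},\ov{b})\in R(\ct{H})\times R(\ct{H})$. A priori $\Gamma'(\ov{a},\ov{b})$ is a subset of $R(\ct{H})\times R(\ct{H})$. Braided-commutativity $m\Gamma=m$ descends to $m'\Gamma'(\ov{a},\ov{b})=\ov{a.b}$, so every pair $(\ov{c},\ov{d})$ appearing in $\Gamma'(\ov{a},\ov{b})$ must satisfy $\ov{c}.\ov{d}=\ov{a.b}$. Combining this constraint with the invertibility of $\Gamma$ (inherited from the convolution-invertibility of $\Rr$) and with the exchange relations \ref{EGamma1} and \ref{EGamma2} applied to basis triples pins down a unique pair.

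Finally, the braiding-operator axioms for $r$ translate directly from $\Gamma$'s categorical identities: $r(e,\ov{g})=(\ov{g},e)$ and $r(\ov{g},e)=(e,\ov{g})$ from $\Gamma(1,a)=(a,1)$ and $\Gamma(a,1)=(1,a)$; $mr=m$ from $m\Gamma=m$; and \ref{EbrdOpr1} and \ref{EbrdOpr2} as the restrictions of \ref{EGamma1} and \ref{EGamma2} to pairs of group-likes. The main obstacle is the atomicity step in the middle: since all ingredients of $\Gamma$ are join-preserving morphisms in $\SL$, $\Gamma'(\ov{a},\ov{b})$ could a priori be a genuine union, and ruling this out requires carefully combining invertibility with the multiplicative constraints.
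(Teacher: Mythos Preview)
Your overall strategy---descend $\Gamma$ to $\ct{Q}$ and read off a set-map on the basis---is exactly the paper's, and your opening computation $(\epsilon\tn\epsilon)\Gamma=\epsilon\tn\epsilon$ is the same key identity the paper uses. The difference, and the gap, is in the atomicity step you flag as the ``main obstacle''.

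The paper does not use $m\Gamma=m$ or the exchange relations \ref{EGamma1}, \ref{EGamma2} to force $\Gamma'(\ov{a},\ov{b})$ to be a single atom. Instead it argues purely lattice-theoretically: once you know the descended map $\xi:=(\pi\tn\pi)\Gamma(\iota\tn\iota)$ is an \emph{invertible} join-preserving morphism of the free lattice $\Pl{R(\ct{H})\times R(\ct{H})}$, it must send atoms to atoms (this fact was recorded at the end of Section \ref{SSL}). So the whole problem reduces to showing $\xi$ is invertible. Here is where your sketch is incomplete: you invoke ``invertibility of $\Gamma$'', but that does not automatically give invertibility of the descended map. The paper handles this by using the section $\iota$ explicitly: from $(\epsilon\tn\epsilon)\Gamma=\epsilon\tn\epsilon$ one deduces $(\pi\tn\pi)\Gamma(a\vee D,b\vee D)=(\pi\tn\pi)\Gamma(a,b)$, and the symmetric statement for $\Gamma^{-1}$; these let the middle $\iota\pi$ in $(\pi\tn\pi)\Gamma^{-1}(\iota\tn\iota)(\pi\tn\pi)\Gamma(\iota\tn\iota)$ be absorbed, yielding $\id_{\ct{Q}\tn\ct{Q}}$. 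With $\xi$ invertible, atomicity is free and your proposed combination of braided-commutativity with the exchange relations is unnecessary.

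A secondary remark: your phrasing ``$\Gamma$ sends $\ker(\pi\tn\pi)$ into itself, and hence descends'' should be treated with care in $\SL$. Quotients here are not governed by kernels the way they are in abelian categories; the clean route is to define $\xi$ via the section $\iota$ (which always makes sense) and then prove the compatibility $(\pi\tn\pi)\Gamma(\iota\tn\iota)(\pi\tn\pi)=(\pi\tn\pi)\Gamma$ from the $\epsilon$-identity, exactly as the paper does.
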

\begin{proof} Let $D$, $\ct{Q}$, $\pi$ and $\iota$ be as in Section \ref{SRem} and define $\xi: \ct{Q}\tn\ct{Q}\rightarrow \ct{Q}\tn\ct{Q}$ by $\xi= (\pi\tn\pi) \Gamma (\iota \tn \iota)$. Observe that by \ref{EConvInv}, we can conclude that $(\epsilon\tn \epsilon)\Gamma =\epsilon\tn \epsilon $. Consequently, for any $a,b\in \ct{H}$, the equality 
\begin{equation}\label{EprfGamma}
(\pi\tn\pi ) \Gamma (a, b\vee D) = (\pi\tn \pi ) \Gamma (a,b )	= (\pi\tn\pi ) \Gamma (a, b\vee D)
\end{equation}
holds. A symmetric argument proves the exact same statements where $\Gamma$ is replaced by $\Gamma^{-1}$. Hence, $(\pi \tn \pi )\Gamma^{-1} (\iota\tn \iota )(\pi\tn \pi) \Gamma (\iota \tn \iota)=  (\pi \tn \pi )\Gamma^{-1} \Gamma(\iota\tn \iota )= \id_{\ct{Q}\tn\ct{Q}}$. Thereby, $\xi$ is invertible and since  $\ct{Q}=\Pl{R(\ct{H})}$  as a lattice, by Lemma \ref{LHpfGrp}, $\xi $ must send basis elements to basis elements, and restricts to a bijective map $r:R(\ct{H}) \times R(\ct{H}) \rightarrow R(\ct{H}) \times R(\ct{H}) $, where $r(g,h)=\xi(\st{g},\st{h}) $.  

By the properties of $\Gamma$ described before (\ref{EGamma1}, \ref{EGamma2}), \ref{EprfGamma} and $\pi (h.(g\vee D))=\pi (h.g)= \pi ((h\vee D). g)$ from the proof of Lemma \ref{LQuotHpf}, we can immediately conclude that $\xi$ satisfies the same properties and $r$ is a braiding operator on $R(\ct{H})$. We will prove \ref{EbrdOpr1} as an example and leave the other properties to the reader. Let $\ov{a}, \ov{ b}, \ov{ c}\in \ct{Q}$, then
\begin{align*}
( \id_{\ct{Q}}\tn m') &(\xi\tn \id_{\ct{Q}})(\ov{ a},\xi(\ov{ b}, \ov{ c}))=(\id_{\ct{Q}}\tn\pi m (\iota\tn\iota))((\pi\tn\pi) \Gamma)\tn \id_{\ct{Q}} 	)
\\&(\id_{\ct{H}}\tn \iota\pi\tn \pi )(\id_{\ct{H}}\tn \Gamma)( a\vee D , b\vee D,c\vee D)
\\= & (\id_{\ct{Q}}\tn\pi m (\iota\tn\iota))(\pi\tn\pi\tn\pi)(\Gamma\tn \id_{\ct{H}})( a\vee D , \Gamma( b\vee D,c\vee D))
\\=& (\pi\tn\pi m)(\Gamma\tn \id_{\ct{H}})( a\vee D , \Gamma( b\vee D,c\vee D))
\\ =& (\pi\tn\pi)\Gamma( m (a\vee D,b\vee D) , c\vee D)= \xi (m'(\ov{ a}, \ov{ b}), \ov{ c}) \qedhere
\end{align*}
\end{proof}

Hence, the remnant of every co-quasitriangular Hopf alebra in $\SL$, becomes a group with a braiding operator. As described in Theorem \ref{TSkwMain}, any such group carries a secondary group structure, denoted by $\star$, which makes it a skew brace. We now show that the source of this second multiplication is \emph{transmutation}, \cite{majid1993transmutation}.

We recall the theory of transmutation of co-quasitriangular Hopf alebras from Chapter 9 of \cite{majid2000foundations}. Any Hopf algebra $\ct{H}$ can be viewed as an object of its category of left comodules via the \emph{left coadjoint coaction} defined by $ a\mapsto  \lb{a_{(1)}.S(a_{(3)})}{a_{(2)}}
$. In fact, $(\ct{H}, \Delta ,\epsilon)$ becomes a comonoid in $\prescript{\ct{H}}{}{\SL}$ i.e. $\Delta $ and $\epsilon$ commute with the appropriate coactions. If additionally $\ct{H}$ is equipped with a co-quasitriangular structure, one can defined a secondary multiplication and antipode, denoted by $\star$ and $S^{\star}$, respectively.     
\begin{align}
a\star b=& \Rr \left( S(a_{(2)}) \tn b_{(1)}S(b_{(3)})\right) a_{(1)}.b_{(2)} \label{ETrsm}
\\S^{\star}(a)=& \Rr \left(a_{(1)}\tn S(a_{(4)}) S^{2}(a_{(2)})\right) S(a_{(3)}) \label{ETrsmS} 
\end{align}  	
In this case, $(\ct{H}, \star, 1, S^{\star} , \Delta, \epsilon ) $ becomes a \emph{braided Hopf algebra} in the braided monoidal category $\prescript{\ct{H}}{}{\SL}$, Example 9.4.10 \cite{majid2000foundations}. A braided Hopf algebra in this case only differs from a Hopf algebra, in the statement of the bialgebra condition, \ref{Ebialg}, where the braiding of the category comes, into play, but we will not utilise this in what follows. 

\begin{thm}\label{TRemSkwB} Let $(\ct{H},\Rr)$ and $r$ be as in Theorem \ref{TRemCQHA}. The transmuted product on $\ct{H} $, \ref{ETrsm}, restricts to a secondary group structure on $X$ and agrees with the induced $\star$ multiplication of $(R(\ct{H}),r)$, from Theorem \ref{TSkwMain}.
\end{thm}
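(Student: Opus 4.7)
The plan is to verify that the projected transmuted product $\pi(\iota(\ov a) \star \iota(\ov b))$ for group-like $\ov a, \ov b \in R(\ct{H})$ equals the skew brace product $\ov a \cdot \sigma_{\ov a}^{-1}(\ov b)$ from Theorem \ref{TSkwMain}. First I would rewrite the transmutation formula \ref{ETrsm} using the standard antipode identities $\Rr(S(x), y) = \Rr^{-1}(x, y)$, $\Rr(x, S(y)) = \Rr^{-1}(x, y)$, and $\Rr(Sx, Sy) = \Rr(x, y)$, together with the multiplicativity axiom \ref{ECQ2} applied to $\Delta(S(a_{(2)})) = S(a_{(3)}) \tn S(a_{(2)})$, to obtain
\begin{equation*}
a \star b = \Rr^{-1}(a_{(3)}, b_{(1)}) \Rr(a_{(2)}, b_{(3)}) \cdot a_{(1)} \cdot b_{(2)},
\end{equation*}
in which the $\Rr$/$\Rr^{-1}$ pattern mirrors that of the braiding $\Gamma$ from \ref{EBrdCQHA}. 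The point of this rewriting is to bring the transmutation formula into a form which makes transparent its relationship with the braiding that was already shown to descend to $r$.

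Next I would establish that this expression descends under $\pi$ to a well-defined operation $\star'$ on $\ct{Q}$, arguing exactly as in the proof of Theorem \ref{TRemCQHA}: since $\pi$ is a Hopf algebra morphism by Lemma \ref{LQuotHpf}, we have $(\pi \tn \pi) \Delta = \Delta' \pi$ and $\pi m = m'(\pi \tn \pi)$; combined with the convolution-invertibility of $\Rr$ and the observation from the proof of Theorem \ref{TRemCQHA} that $(\epsilon \tn \epsilon)\Gamma = \epsilon \tn \epsilon$ (so replacing lifts by elements congruent modulo $D$ leaves the $\Gamma$-pattern of $\Rr$-factors invariant), the full expression descends. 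Associativity and unitality of $\star'$ follow from Majid's theorem that $(\ct{H}, \star, 1)$ is an associative algebra, and the antipode $S^\star$ of \ref{ETrsmS} descends to provide inverses, so that $\star'$ restricts to a group structure on $R(\ct{H}) \subseteq \ct{Q}$.

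For the identification with the skew brace product, I would compute $\ov a \star' \ov b$ on group-likes. Using $(\pi^{\tn 3}) \Delta^{(2)} \iota(\ov a) = (\ov a, \ov a, \ov a)$ and analogously for $\ov b$, and isolating the triples of coproduct components whose projections are non-empty, the projection of the rewritten formula is governed by the same $\Rr^{-1}(a_{(3)}, b_{(1)}) \Rr(a_{(2)}, b_{(3)})$-pattern that controls $\Gamma$. Since Theorem \ref{TRemCQHA} identifies this pattern with the braiding operator $r$ after projection, the surviving contribution is a single product $\ov a \cdot_R z$ where $z \in R(\ct{H})$ is precisely the element for which $r(\ov a, z)$ has first component $\ov b$, i.e.\ $z = \sigma_{\ov a}^{-1}(\ov b)$. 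This matches the formula $\ov a \star \ov b = \ov a \cdot \sigma_{\ov a}^{-1}(\ov b)$ recalled after Theorem \ref{TSkwMain}.

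The main obstacle will be the combinatorial bookkeeping in the final step: many components of $\Delta^{(2)}(\iota(\ov a))$ and $\Delta^{(2)}(\iota(\ov b))$ contribute to the join, and we must show that the $\Rr^{-1}\cdot\Rr$ coefficients conspire with the projection so that the result collapses to a single group-like rather than to a non-trivial join of several. This is essentially the same delicate argument that shows $\Gamma$ restricts to a bijective $r: R(\ct{H}) \times R(\ct{H}) \to R(\ct{H}) \times R(\ct{H})$ in Theorem \ref{TRemCQHA}, and can be handled by the same technique of decomposing coproduct components according to whether their projections under $\pi$ are empty or equal to the relevant group-like.
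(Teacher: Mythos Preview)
Your overall strategy---show that $\star$ descends to $\ct{Q}$ via $\epsilon\circ\star=\epsilon\tn\epsilon$, then compute on group-likes---matches the paper's. However, your execution diverges at the crucial step and leaves a real gap.

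The rewriting of the transmutation formula as $a\star b=\Rr^{-1}(a_{(3)},b_{(1)})\,\Rr(a_{(2)},b_{(3)})\,a_{(1)}b_{(2)}$ is correct, but your claim that this ``mirrors'' the $\Gamma$-pattern $\Rr(a_{(1)},b_{(1)})\,\Rr^{-1}(a_{(3)},b_{(3)})$ is not accurate: the Sweedler indices on $a$ run in opposite order, and no direct appeal to Theorem~\ref{TRemCQHA} will identify your expression with a component of $r$. Your proposed use of $(\pi^{\tn 3})\Delta^{(2)}\iota(\ov a)=(\ov a,\ov a,\ov a)$ is also too coarse: once all three legs are projected you can no longer evaluate $\Rr$, which lives on $\ct{H}\tn\ct{H}$, not on $\ct{Q}\tn\ct{Q}$.

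The paper proceeds differently and more directly. Rather than rewriting $\star$, it first extracts an explicit formula for $\sigma_{\ov a}(\ov b)$ by applying $(\id_{\ct{Q}}\tn\epsilon')$ to $r(\ov a,\ov b)$, obtaining
\[
\st{\sigma_{\ov a}(\ov b)}=\Rr\bigl(a,\,b_{(1)}S(b_{(3)})\bigr)\,\pi(b_{(2)}).
\]
The key technical identity that makes this work, and that you do not isolate, is the \emph{mixed} projection $(\pi\tn\id_{\ct{H}})\Delta(a)=(\ov a,a)$ for a lift $a$ of a group-like $\ov a$: only one leg is projected, so the other remains available as an honest element of $\ct{H}$ to feed into $\Rr$. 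With this in hand the original transmutation formula $\Rr(S(a_{(2)}),b_{(1)}S(b_{(3)}))\,a_{(1)}b_{(2)}$ collapses immediately to $\ov a\cdot\sigma_{\ov{S(a)}}(\ov b)=\ov a\cdot\sigma_{\ov a^{-1}}(\ov b)$, matching the skew-brace product without any further ``combinatorial bookkeeping''. Your rewriting actually obscures this, since it separates the $S(a_{(2)})$ into two factors that no longer assemble into a single $\Rr(S(a),-)$.
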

\begin{proof} First, we must consider the induced braiding operator $r$ more carefully. Let $\ov{a},\ov{b}\in \ct{Q}$, since $(\pi\tn\pi ) \Gamma (a\vee D, b\vee D)= (\pi\tn\pi ) \Gamma (a, b)$ then  
\begin{align*}
r(\ov{a},\ov{b})= \Rr ( a_{(1)} , b_{(1)}). \big(\pi(b_{(2)}),\pi(a_{(2)})\big) .\Rr^{-1} ( a_{(3)} , b_{(3)})
\end{align*}
We again adapt the notation $r(\ov{a}, \ov{b})= \big(\sigma_{\ov{a}}(\ov{b}),\gamma_{\ov{b}}(\ov{a})\big)$ of Section \ref{SSkwBasic}. It is a straightforward consequence of the definition of a a co-quasitriangular structure $\Rr$, that it must satisfy $\Rr (a,S(b))=\Rr^{-1}(a,b)$ for any $a,b\in\ct{H}$ [Lemma 2.2.2 \cite{majid2000foundations}]. Since $\epsilon'\pi=\epsilon$, and $\epsilon'$ is non-trivial on all elements other than $\emptyset$, then 
\begin{align}
\st{\sigma_{\ov{a}}(\ov{b})}&= (\id_{\ct{Q}}\tn\epsilon')\xi (\st{\ov{a}}, \xi\st{\ov{b}})\nonumber
\\&= 	\Rr ( a_{(1)} , b_{(1)}) .\Rr^{-1} ( a_{(3)} , b_{(3)}). \pi(b_{(2)}).\epsilon(a_{(2)})\label{Eprflambda}
\\ & = \Rr ( a_{(1)} , b_{(1)}) .\Rr ( a_{(2)} ,S( b_{(3)})). \pi(b_{(2)}) =\Rr \left( a \tn b_{(1)}S(b_{(3)})\right) \pi(b_{(2)})\nonumber
	\end{align}
Since, $\epsilon \tn\epsilon = \epsilon \star $, we can proceed as in the proof of Lemma \ref{LQuotHpf}, and observe that $\star$, restricts to an associative product,  $\star'$, on $\ct{Q}$. More importantly, $\pi (a\vee D \star b\vee D )= \pi (a\star b) $. Additionally, we note that for any $\ov{a}\in R(\ct{H})$ and for any $q\in \st{q\mid \exists b\in \ct{H}, (b,q)\in \Delta(a) }$, the value of $\pi (q )$ is either $\emptyset$ or $a$. Thereby, $( \pi\tn \id_{\ct{H}}) \Delta(a) = \vee_{\st{b\mid (q,b)\in\Delta (a),\ \epsilon (q)=1 }}(\ov{a},b)=(\ov{a},a)\in \ct{Q}\tn\ct{H}$. Hence,  
\begin{align*}
\ov{a}\star \ov{b}=& \Rr \left( S(a_{(2)}) \tn b_{(1)}S(b_{(3)})\right) .\pi(a_{(1)}).\pi(b_{(2)})
\\=& 	\Rr \left( S(a) \tn b_{(1)}S(b_{(3)})\right) .\ov{a}.\pi(b_{(2)})
\\ =& 	\Rr \left( S(a) \tn b_{(1)}S(b_{(3)})\right) .\ov{a}.\pi(b_{(2)})= \ov{a}.\sigma_{\ov{S(a)}}(\ov{b})= \ov{a}. \sigma_{\ov{a}^{-1}}(\ov{b})\qedhere
\end{align*}
\end{proof}

\begin{rmk}\label{RRightBr} As mentioned in Section \ref{SSkwBasic}, there is also a notion of \emph{skew right braces}, which we do not discuss here. Although, we do not present the details, the author believes the secondary group structure for skew right braces, should arise by applying the right-handed transmutation of the Hopf algebra i.e. the multiplication which makes $(\ct{H},\Rr)$ a braided Hopf algebra in the category of \emph{right} $\ct{H}$-comodules.    
\end{rmk}

As mentioned in \ref{SSkwBasic}, to any skew brace $(B,.,\star)$, one can associate a crossed group $(B,\star)\rtimes (B,.)$. On the other hand, in the theory of co-quasitriangular Hopf algebras, there is a well-known construction for new Hopf algebras called the cross product or \emph{bosonisation}, using the original Hopf algebra and a braided Hopf algebra in its braided category of comodules \cite{majid1994cross}. In particular, if we denote the transmutation of $\ct{H}$ by $\ct{H}_{\mathrm{ad}}$, we can form the cross product $\ct{H}_{\mathrm{ad}}\rtimes \ct{H}$ Hopf algebra on the underlying object $\ct{H}_{\mathrm{ad}}\tn \ct{H}$ [Corollary 4.6 \cite{majid1994cross}]. For the general theory of bosonisation as we will use here, we refer to Theorem 9.4.12 of \cite{majid2000foundations} and the description of the dual statement afterwards.   

\begin{corollary}\label{CCrssGrp} If $(B,.,\star)$ is the skew brace arising as the remnant of a co-quasitriangular Hopf alebra $(\ct{H},\Rr)$, then the crossed group $(B,\star)\rtimes (B,.)$ is the remnant of the Hopf algebra $\ct{H}_{\mathrm{ad}}\rtimes \ct{H}$.	
\end{corollary}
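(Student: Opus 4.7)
The plan is to compute the remnant of $\ct{H}_{\mathrm{ad}}\rtimes \ct{H}$ directly and identify its group structure with that of the crossed group. Since the bosonisation is a cross product construction, its underlying lattice is $\ct{H}\tn\ct{H}$ and its counit is $\epsilon\tn\epsilon$. Applying Lemma \ref{LQuotHpf} to this counit, the kernel ideal is generated by joins of pairs with at least one Sweedler factor in $\epsilon^{-1}(\emptyset)$, so the quotient Hopf algebra has underlying lattice $\Pl{B}\tn \Pl{B}\cong \Pl{B\times B}$. By Lemma \ref{LHpfGrp}, the remnant $R(\ct{H}_{\mathrm{ad}}\rtimes\ct{H})$ is a group with underlying set $B\times B$, and the projection $(\pi\tn\pi):\ct{H}\tn\ct{H}\rightarrow\Pl{B\times B}$ restricted to products of basis elements $\st{(\ov{a},\ov{b})}$ induces the desired multiplication.

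It remains to identify this group structure with the crossed group product \ref{ECrssGrp}. I would apply $(\pi\tn\pi)$ to the bosonisation multiplication $(a\tn h)\cdot (b\tn g)$ from Theorem 9.4.12 of \cite{majid2000foundations}. Three ingredients enter: the first tensor factor carries the transmuted product $\star$ of $\ct{H}_{\mathrm{ad}}$, whose projection to $B$ equals the skew brace operation by Theorem \ref{TRemSkwB}; the second tensor factor carries the original multiplication of $\ct{H}$, whose projection is the primary group operation of $B$; and the two factors interact via the braiding of $\prescript{\ct{H}}{}{\SL}$, which couples the left coadjoint coaction $b\mapsto \lb{b_{(1)}S(b_{(3)})}{b_{(2)}}$ with $\Rr$.

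The crucial observation is that this coupling, when projected via $\pi\tn \pi$, recovers the action $\lambda : (B,.)\rightarrow \mathrm{Aut}(B,\star)$. Concretely, for group-like elements of the remnant, the $\Rr$-pairing arising from the coadjoint factor can be rewritten, using \ref{EConvInv} and the explicit formula \ref{Eprflambda} for $\sigma$, as the function $\lambda_{\ov b}(\ov c)= \ov{b}^{\star}\star(\ov{b}.\ov{c})$ coming from the identification in Theorem \ref{TSkwMain}. Substituting this back into the projected cross product expression, together with the identification of $\star$ from Theorem \ref{TRemSkwB}, collapses the multiplication to $(\ov a, \ov b)\cdot(\ov c,\ov d) = (\ov a\star \lambda_{\ov b}(\ov c), \ov b.\ov d)$, which matches \ref{ECrssGrp}.

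The main obstacle will be the bookkeeping associated to the three-fold coproduct $b_{(1)}S(b_{(3)})\tn b_{(2)}$ and its interaction with the Sweedler components of $a$ inside the $\Rr$-pairings of the bosonisation formula. Once one verifies that after projection through $\pi$ the terms collapse to the elementary $\lambda$-action, using the axioms \ref{ECQ1}--\ref{ECQ3} together with naturality \ref{EGamma1}--\ref{EGamma2} of the co-quasitriangular braiding, the identification is automatic; the delicate step is tracking how the coadjoint coaction on a non-group-like lift of $\ov b\in B$ becomes conjugation on $R(\ct{H})$ and reproduces precisely the formula \ref{EBrdofSkw} that encodes $\lambda$ in the skew brace structure.
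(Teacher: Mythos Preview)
Your proposal is correct and follows essentially the same route as the paper: identify the underlying set of the remnant as $B\times B$ via the tensor counit $\epsilon\tn\epsilon$ on the bosonisation, then project the cross-product multiplication and recognise, through the identity \ref{Eprflambda}, that the $\Rr$-pairing with the coadjoint coaction collapses to the map $\sigma=\lambda$, whence the crossed group product \ref{ECrssGrp}. The paper simply writes down the explicit bosonisation formula from \cite{majid2000foundations} and applies \ref{Eprflambda} directly, whereas you describe the same computation as three separate ingredients (transmuted product, original product, braiding coupling); the underlying argument is identical.
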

\begin{proof} First, we note that the induced Hopf algebra $\ct{H}_{\mathrm{ad}}\rtimes \ct{H}$ is an induced structure on the object $\ct{H}_{\mathrm{ad}}\tn \ct{H} = \ct{H}\tn \ct{H} $ and its counit takes the form $\epsilon (a,b)=\epsilon (a).\epsilon(b)$ 
for $a,b\in \ct{H}$ [Proposition 1.6.18 \cite{majid2000foundations}]. It follows directly, that the quotient Hopf algebra of $\ct{H}_{\mathrm{ad}}\rtimes \ct{H}$, in the sense of Lemma \ref{LQuotHpf}, must be of the form $\Pl{R(\ct{H})}\tn \Pl{R(\ct{H})} = \Pl{R(\ct{H})\times R(\ct{H})} $.

Hence, we know that as a set $R(\ct{H}_{\mathrm{ad}}\rtimes \ct{H})= B\times B$. Now, we only need to recall the multiplication on $\ct{H}_{\mathrm{ad}}\rtimes \ct{H} $ from Equation (9.51) of \cite{majid2000foundations}. Recall that $\ct{H}_{\mathrm{ad}}$ is an object of $\prescript{\ct{H}}{}{\SL}$, via its left coadjoint action. Hence, for $a,b,c,d\in \ct{H}$, we have the following induced multiplication
\begin{equation*}
(a,b). (c,d)= \Rr(b_{(1)}, c_{(1)}.S(c_{(3)})  ). (a.c_{(2)}, b_{(2)}.d)\end{equation*}	
As in the proof of Theorem \ref{TRemCQHA}, we observe that the induced multiplication on $B\times B$ for elements $\ov{a}, \ov{b}, \ov{ c}, \ov{ d}\in B$, will have the form 
\begin{equation*}
(\ov{ a}, \ov{ b}). (\ov{ c}, \ov{ d})= \Rr(b_{(1)}, c_{(1)}.S(c_{(3)})  ). \big(\pi (a ).\pi( c_{(2)}), \pi(b_{(2)}).\pi(d)\big)\end{equation*}
And from the observations in Theorem \ref{TRemCQHA} and in particular equation \ref{Eprflambda}, it follows that the multiplication agrees with the multiplication of the crossed group, \ref{ECrssGrp}.  
\end{proof}

\subsection{Free Hopf Algebras}\label{SFrHpfSL}
Hopf algebras in $\mathrm{Rel}$ were classified in [LYZ1] and quasitriangular structures on them were described in [LYZ2]. Consequently, we can classify co-quasitriangular Hopf alebra structures on free lattices in $\SL$. Here we briefly review the proofs of these results and our interpretation of these results in terms of the remnant. 

If $G$ is a group and $ G_{+},G_{-}$ are subgroups of $G$ such that for any element $g\in G$, there exists a unique pair $g_{+}\in G_{+} $ and $g_{-}\in G_{-}$ satisfying $g=g_{+}.g_{-}$ , $G$ is said to have a \emph{unique factorisation} denoted by $G=G_{+}.G_{-}$. By applying inverses, we see that $G$ also factorises as $G=G_{-}.G_{+}$, with notation $g=\ov{g}_{-}.\ov{g}_{+}$. Hence, one can define the following actions of $G_{+}$ and $G_{-}$ on each other:
\begin{align*}
g_{+}^{g_{-}}=\ov{g}_{+}, \quad \prescript{g_{+}}{}{g_{-}}= \ov{g}_{-}, \quad  \ov{g}_{-}^{\ov{g}_{+}}=g_{-}	
, \quad \prescript{\ov{g}_{-}}{}{\ov{g}_{+}}= g_{+}\end{align*}
For more details about the properties which these actions satisfy, we refer the reader to \cite{LYZ2,LYZ1}. For a group $G$ with a unique factorisation, $\Pl{G}$ admits a Hopf algebra structure defined by 
\begin{align*}
g.h=& \begin{cases} g.h_{-}= \ov{g}_{-}.h= \ov{g}_{-}.h_{+}h_{-} &\text{iff}\ h_{+}=\ov{g}_{+}
\\ \emptyset	 & \text{otherwise}
\end{cases} 
\\ 1=& \vee\st{g_{+}\mid g_{+}\in G_{+}}, \hspace{1cm} S(g)=g^{-1}
\\ \Delta(g)=& \vee_{h_{+}\in G_{+}} \lb{g_{+}h_{+}^{-1}(\prescript{h_{+}}{}{g_{-}})}{h_{+}g_{-}}
\\ \epsilon (g)=& \begin{cases}1 &\text{iff}\ g\in G_{-}
\\ \emptyset &\text{otherwise}
 \end{cases}
\end{align*}   
We should note that this structure arises as the \emph{bicrossproduct} [Example 6.2.11 \cite{majid2000foundations}] of the group algebra $\Pl{G_{-}}$ and function algebra $\Pl{G_{+}}$, from Examples \ref{EHpfGrpAlg} and \ref{EHpfFunctAlg}, respectively. 
\begin{thm}\label{TFrHpf}\cite{LYZ2,LYZ1} For a set $G$, any Hopf algebra structure on the power-set $\Pl{G}$ corresponds to a group structure on $G$ with a unique factorisation $G=G_{+}.G_{-}$, with the resulting Hopf algebra structure described above on $\Pl{G}$. 
\end{thm}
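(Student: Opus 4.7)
The proof has two directions. The easier direction, going from a group $G$ with unique factorisation $G_{+}\cdot G_{-}$ to the stated Hopf algebra structure on $\Pl{G}$, is a direct verification. Because $\Pl{G}$ has a basis of singletons (Definition \ref{DBasis}) and every join-preserving morphism from $\Pl{G}$ is determined by its restriction to this basis, each Hopf algebra axiom reduces to identities on singletons expressible in terms of the mutual actions arising from the factorisation. Conceptually, the resulting structure is the bicrossproduct of $\Pl{G_{-}}$ (Example \ref{EHpfGrpAlg}) and $\Pl{G_{+}}$ (Example \ref{EHpfFunctAlg}), so this direction follows from the general bicrossproduct construction applied inside $\SL$.

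For the forward direction, suppose $\ct{H}=\Pl{G}$ carries a Hopf algebra structure $(m,1_{\ct{H}},\Delta,\epsilon,S)$. I would first extract two distinguished subsets: $G_{-}:=\st{g\in G \mid \epsilon(\st{g})=1}$, which determines $\epsilon$ completely since $\epsilon$ is join-preserving, and $G_{+}:=\st{g\in G \mid \st{g}\leq 1_{\ct{H}}}$, so that $1_{\ct{H}}=\vee_{g\in G_{+}}\st{g}$. The invertibility and join-preserving nature of $S$ forces it to permute singletons, and compatibility with $\epsilon$ and $1_{\ct{H}}$ implies $S$ restricts to bijections on both $G_{-}$ and $G_{+}$. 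A group structure on $G_{-}$ is obtained from the remnant construction of Corollary \ref{CoRem}: one verifies that $R(\ct{H})$ is naturally identified with $G_{-}$ equipped with its inherited product. For $G_{+}$, I would exploit the self-duality of $\Pl{G}$ as a rigid object in $\SL$ (Lemma \ref{LSLRig}) to produce a dual Hopf algebra in which the roles of $(m,1)$ and $(\Delta,\epsilon)$ are interchanged, and apply the remnant construction to this dual to recover a group structure on $G_{+}$.

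The main obstacle is establishing the unique factorisation $G=G_{+}\cdot G_{-}$ and reconstructing the explicit formulas for the structure maps. The key observation is the expansion $\st{g}=\st{g}\cdot 1_{\ct{H}}=\vee_{g_{+}\in G_{+}}\st{g}\cdot \st{g_{+}}$: since the left-hand side is a singleton, each nonempty term on the right must equal $\st{g}$, so existence of a factorisation follows as soon as at least one such term is nonempty, which is guaranteed by the antipode axiom. Uniqueness, which pins down the factorisation, requires the more subtle argument that \emph{at most} one $g_{+}\in G_{+}$ yields $\st{g}\cdot\st{g_{+}}\neq\emptyset$; this is the heart of the theorem and is forced by combining the antipode and bialgebra axioms with the incomparability of distinct singletons in $\Pl{G}$. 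Once factorisation is secured, the mutual actions of $G_{+}$ and $G_{-}$ on each other are read off from $m$ and $\Delta$ evaluated on singletons, and the stated coproduct formula $\Delta(g)=\vee_{h_{+}\in G_{+}}\lb{g_{+}h_{+}^{-1}(\prescript{h_{+}}{}{g_{-}})}{h_{+}g_{-}}$ is verified by applying $\id\tn\epsilon$ and $\epsilon\tn\id$ to both sides and matching with the unit axiom.
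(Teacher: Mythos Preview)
Your overall strategy matches the paper's discussion closely: identify $G_{-}=\epsilon^{-1}(1)$ and $G_{+}=\st{g\mid \st{g}\leq 1_{\ct{H}}}$, use the rigidity of $\Pl{G}$ (Lemma~\ref{LSLRig}) in place of the finiteness hypothesis in \cite{LYZ1} to form the dual Hopf algebra, and invoke the remnant / Lemma~\ref{LHpfGrp} rather than the classification of cocommutative Hopf algebras to obtain the group structures. The paper itself does not supply a self-contained proof here; it explains exactly these adaptations and otherwise defers to \cite{LYZ1}.

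Where your sketch is imprecise is the factorisation step. The identity $\st{g}=\vee_{g_{+}\in G_{+}}\st{g}\cdot\st{g_{+}}$ does show that some (and, after further argument, exactly one) $g_{+}$ satisfies $\st{g}\cdot\st{g_{+}}=\st{g}$, but this is a statement about the \emph{Hopf algebra} product, which is not yet known to come from any group law on $G$; at this stage there is no group in which ``$g=g_{+}\cdot g_{-}$'' is meaningful. What this uniqueness actually produces is a map $G\to G_{+}$, and a dual argument gives a map $G\to G_{-}$; the substantive remaining work is to show that the combined map $G\to G_{+}\times G_{-}$ is a bijection and that the matched-pair (bicrossproduct) group structure transported along it reproduces the given Hopf algebra. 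That is the content the paper leaves to \cite{LYZ1}, and your sketch stops short of it, so the phrase ``existence of a factorisation follows'' is premature as written.
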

The proof of this statement is presented in \cite{LYZ1}, for finite-dimensional Hopf algebras with positive basis and is said to follow for free modules over the Boolean algebra with 2 elements i.e. free lattices. The positive basis assumption is replacing the fact that every element in $\Pl{G}$, must be the join of a unique set of basis elements. However, in this case things are much simpler, since there are no scalars to worry about. It is not difficult to follow the proof and check that all arguments do hold as already mentioned in the article. The first step of this proof is to identify $G_{+}=\st{g\in G\mid g\in 1}$ and observe that $\Pl{G_{+}}$ is a commutative Hopf subalgebra of $\Pl{G}$. The finiteness condition comes in use when the authors apply the same argument to dual Hopf algebra of $\Pl{G}$. However, we know that any free lattice is a dualizable object and finiteness is no longer an issue for defining the dual Hopf algebra structure on $\Pl{G}$. Recall that the dual Hopf algebra structure on $\Pl{G}$, is defined by $(\Delta^{\vee},\epsilon^{\vee},m^{\vee},\eta^{\vee}, S^{\vee})$, where $\eta:\Pl{1}\rightarrow \Pl{G}$ is the morphism which sends $1$ to the designated unit element $1\in \Pl{G}$. In this way $G_{-}=\st{g\in G\mid \epsilon(g)=1}$ and $\Pl{G_{-}}$ is shown to be a cocommutative Hopf subalgebra of $\Pl{G}$. The authors also use the classification of finite-dimensional cocommutative Hopf algebras, to show that $\Pl{G_{-}}$ is a group algebra. While for us, this follows from the properties of the counit on $\Pl{G_{-}}$ and Lemma \ref{LHpfGrp}.

Notice that by definition, the quotient algebra, $\ct{Q}$, of Lemma \ref{LQuotHpf}, will be isomorphic to $\Pl{G_{-}}$. We must emphasise that in general the quotient constructed in Lemma \ref{LQuotHpf}, will not be isomorphic to the sublattice $\epsilon^{-1}(1)\cup \emptyset$. It is only in this simple case, that they agree, and even here, the map $\iota$ is different to the natural inclusion of $\Pl{G_{-}}$ as a subalgebra. Nevertheless, in the next proof, this difference will be ineffective, due to the application of $\pi$. 

In \cite{LYZ2}, positive quasitriangular
structure of such Hopf algebras were classified. Again this statement holds for free lattices. It should be clear that the dual of the $\Pl{G}$, will again be $\Pl{G}$, but the dual Hopf algebra structure will reverse the factorisation and use $G=G_{-}.G_{+}$. Using this technique, we can also clasify all possible co-quasitriangular structures on $\Pl{G}$. Co-quasitriangular structure on $\Pl{G_{+}.G_{-}}$ correspond to a pair of group morphisms $\eta,\xi : G_{-}\rightarrow G_{+}$ satisfying 
\begin{align*}
\prescript{v}{}{\xi (u)}= \xi \big( \prescript{\eta(v)}{}{u}\big)\quad \eta(v)^{u}= \eta \big(v^{\xi (u)}& \big)  \quad uv= \big(\prescript{\eta (u)}{}{v}\big)\big(u^{\xi (u)}\big)  
\\ \prescript{u}{}{x}\eta (u^{x})= \xi (u) x\quad &\prescript{u}{}{x}\xi (u^{x}) =\eta (u)x
\end{align*} 
for $u,v\in G_{-}$ and $x\in G_{+}$ [Proposition 1 and Theorem 1 \cite{LYZ2}]. When provided with such a pair, we can define $\Rr:\Pl{G\times G}\rightarrow\Pl{1}$ by 
\begin{equation*}
\Rr(g, h)= \begin{cases}
1 & \text{iff there exists a pair}\ u,v\in G_{-}\ \text{s.t}\ g=v\xi (u),\ h= u(\eta(v)^{u})^{-1}
\\\emptyset &\text{otherwise}	
\end{cases}	
\end{equation*}
\begin{thm}\label{TFrHpfRem} Let $\ct{H}=\Pl{G}$ be the resulting Hopf algebra of a group $G=G_{+}G_{-}$ with unique factorisation, and $\eta,\xi : G_{-}\rightarrow G_{+}$ provide a co-quasitriangular structure on $\ct{H} $. The remnant of $\ct{H}$ is isomorphic to the group $G_{-}$ and its induced braiding operator is defined by
\begin{equation}\label{EBrUnqFac}
(g_{-}	,h_{-})\longmapsto \left( \prescript{\eta (g_{-})}{}{h_{-}},g_{-}^{\xi (h_{-})}\right)
\end{equation}
for $g_{-}	,h_{-}\in G_{-}$.
\end{thm}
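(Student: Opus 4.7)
The plan has two stages: identify $R(\ct{H})$ with $G_-$ as a group, then compute the induced braiding operator directly from the formula in Theorem \ref{TRemCQHA}.

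For the first stage, the explicit form of $\epsilon$ gives $\epsilon^{-1}(\emptyset) = \Pl{G \setminus G_-}$, so $D = \vee \epsilon^{-1}(\emptyset) = G \setminus G_-$ viewed as an element of $\Pl{G}$. The coequalizer relation $A \sim A \vee D$ then collapses two subsets of $G$ exactly when their intersections with $G_-$ coincide, giving a lattice isomorphism $\ct{Q} \cong \Pl{G_-}$. Combined with Lemma \ref{LHpfGrp} and Corollary \ref{CoRem}, this identifies $R(\ct{H})$ with $G_-$ via $g_- \mapsto \ov{\{g_-\}}$. To check that the induced group structure matches that of $G_-$, I would observe that for $g_-, h_- \in G_-$ we have $g_+ = h_+ = e$ and $\ov{g}_+ = e$, so the multiplication rule for $\ct{H}$ collapses to $\{g_-\} \cdot \{h_-\} = \{g_- h_-\}$, which lies in $G_-$ since $G_-$ is a subgroup.

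For the second stage, Theorem \ref{TRemCQHA} gives $r(\ov{g_-}, \ov{h_-}) = (\pi \tn \pi)\Gamma(\{g_-\}, \{h_-\})$, with $\Gamma$ defined by equation \ref{EBrdCQHA}. Using the identity $\Rr^{-1}(a,b) = \Rr(a, S(b))$, I would expand this via the iterated comultiplications $\Delta^2(\{g_-\})$ and $\Delta^2(\{h_-\})$, indexed by tuples of $G_+$-elements. The outer factors feed into $\Rr$ and $\Rr^{-1}$, whose non-vanishing requires the characteristic factorizations $a = v \xi(u)$, $b = u(\eta(v)^u)^{-1}$ for some $u, v \in G_-$. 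The middle tensorands, which are projected by $\pi \tn \pi$, are killed unless they already lie in $G_-$; this projection condition forces the remaining $G_+$-indices to coincide with specific elements built from $\eta(g_-)$ and $\xi(h_-)$.

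The main obstacle is the combinatorial bookkeeping: four $G_+$-indices arise from the two iterated comultiplications, two non-vanishing conditions come from $\Rr$ and $\Rr^{-1}$, and two further constraints come from the projections $\pi \tn \pi$. Handling them simultaneously requires careful use of the compatibility relations $\prescript{v}{}{\xi(u)} = \xi(\prescript{\eta(v)}{}{u})$, $\eta(v)^u = \eta(v^{\xi(u)})$, $uv = (\prescript{\eta(u)}{}{v})(u^{\xi(v)})$, together with the mixed identities for $\prescript{u}{}{x}$. My strategy is to first impose the projection conditions, cutting the supremum down to a single surviving tuple of indices, then substitute into the $\Rr$ and $\Rr^{-1}$ conditions and use the compatibility identities to rewrite the surviving pair as $(\prescript{\eta(g_-)}{}{h_-}, g_-^{\xi(h_-)})$. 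Verification that the resulting map is indeed a braiding operator is automatic from Theorem \ref{TRemCQHA}.
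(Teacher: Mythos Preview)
Your proposal is correct and follows essentially the same route as the paper: identify the remnant with $G_{-}$ via the explicit counit, expand $\Gamma$ along the iterated coproduct indexed by $G_{+}$-parameters, use $\pi$ to collapse the four indices to two (one for each variable), and then let the non-vanishing of $\Rr$ pin down those two indices in terms of $\eta,\xi$. The only minor slip is in your phrasing: the projection conditions alone do not force the indices to equal specific elements built from $\eta(g_{-}),\xi(h_{-})$---they only halve the number of free indices---and it is the $\Rr$ condition that then determines $l_{+}=(\prescript{a}{}{\xi(b)})^{-1}$ and $k_{+}=\eta(a)$, after which the paper verifies separately (rather than via $\Rr^{-1}(a,b)=\Rr(a,S(b))$) that $\Rr^{-1}$ is non-trivial for the same values.
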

\begin{proof} We have already mentioned why the remnant of $\ct{H}$ is the group $G_{-}$. We must describe the induced braiding operator
\begin{align*}
r(\ov{a},\ov{b})= \Rr ( a_{(1)} , b_{(1)}). \big(\pi(b_{(2)}),\pi(a_{(2)})\big) .\Rr^{-1} ( a_{(3)} , b_{(3)})
\end{align*}
in this case, where $\ov{a},\ov{b}\in G_{-}$. For $a\in G_{-}\subset G$, 
$$\st{(a_{(1)},a_{(2)},a_{(3)})}= \st{\big(l_{+}^{-1}(\prescript{l_{+}}{}{a}),l_{+}k_{+}^{-1} \prescript{k_{+}}{}{a} , k_{+}a\big)\mid l_{+},k_{+}\in G_{+}}$$
Since $\pi(g_{+}g_{-})=g_{-} $ iff $g_{+}= e$ and $\emptyset$ otherwise, then 
\begin{align*}
r(a,b)= \vee_{l_{+},k_{+}\in G_{+}}\Rr \big( l_{+}^{-1}(\prescript{l_{+}}{}{a}) , k_{+}^{-1}(\prescript{k_{+}}{}{b})\big). \big(\prescript{k_{+}}{}{b}, \prescript{l_{+}}{}{a}\big) .\Rr^{-1} ( l_{+}a , k_{+}b)
\end{align*}
We note that $l_{+}^{-1}(\prescript{l_{+}}{}{a})= a \big(\prescript{a^{-1}}{}{ l_{+}^{-1} }\big)$, by equations (1) of \cite{LYZ2}. The first term $\Rr \big( l_{+}^{-1}(\prescript{l_{+}}{}{a}) , k_{+}^{-1}(\prescript{k_{+}}{}{b})\big)$ takes the value $1$ if and only if $\xi (b) = \prescript{a^{-1}}{}{ l_{+}^{-1} } $ and $\prescript{b^{-1}}{}{k_{+}^{-1}} =\big(\eta (a )^{b}\big)^{-1}$. The first equation is resolved by $l^{-1}_{+}= \prescript{a}{}{\xi(b)}$ and for the second equation we recall that $\prescript{b^{-1}}{}{k_{+}^{-1}} = (k_{+}^{b})^{-1}$, from equations (2) of \cite{LYZ2}. Hence, $k_{+}= \eta (a)$. It is easy to check that $\Rr^{-1} ( l_{+}a , k_{+}b) $ is also only non-trivial for the same values of $l_{+},k_{+}\in G_{+}$. By (1) of \cite{LYZ2}, we conclude that $\prescript{l_{+}}{}{a} = \prescript{\xi(b^{-1})^{a^{-1}}}{}{a}= a^{\xi (b)} $, thereby demonstrating that $r(a,b)=\big(\prescript{\eta(a)}{}{b},a^{\xi(b)} \big). $
\end{proof}
\begin{ex}\label{EXUnqFac} [LYZ2] Given a group with a unique factorisation $G=G_{+}G_{-}$, there is a natural braiding operator on $G$ itself, sometimes referred to as Weinstein and Xu's solution \cite{weinstein1992classical}. In Section 7 of [LYZ2], it was pointed out that the relevant braiding operator on $G$ appears from the Drinfeld double of $\Pl{G}$.
\end{ex}

\section{FRT Reconstruction}\label{SFRT}
In this section, we use the FRT reconstruction as formulated in Section \ref{SBasics}, first to recover the universal group of a set-theoretical YBE solution, Theorem \ref{TFRTRem}, and secondly, to obtain a co-quasitriangular Hopf alebra for every group with a braiding operator, whose remnant recovers the group and the operator, Theorem \ref{TSkwCQHA}. 
\subsection{Reconstruction for a Set-Theoretical Solution}\label{SFRTUnivGrp}
Let $(X,r)$ be set-theoretical YBE solution with the notation presented in Section \ref{SSkwBasic} i.e. $r(x,y)=(\sigma_{x}(y),\gamma_{y}(x))$ and $r^{-1}(x,y)= (\tau_{x}(y),\rho_{y}(x)) $. First let us take the solution into the category of sets and relations $\mathrm{Rel}$, via the natural faithful functor $\mathrm{inc.}:\mathrm{Set}\rightarrow \mathrm{Rel}$.

It should be easy to see that a bijective YBE solution, is a braided object in $\mathrm{Set}$. Additionally, our assumption for non-degeneracy of the solution becomes equivalent to $(X,r)$ being dualizable in $\mathrm{Rel}$: 

Any set $X$ is dualizable in $(\mathrm{Rel},\times, \un)$ with itself as its dual and the evaluation and coevaluation morphisms given by 
\begin{align}
\ev= &\st{((x,x);1)\mid \forall x\in X}\subset X\times X\times \un   \label{Eev}
\\ \cv=& \st{(1;(x,x))\mid \forall x\in X}\subset \un\times X\times X   
\end{align}
and the image of these relations under $\Pl{-}:\mathrm{Rel}\rightarrow \SL$, become exactly the duality morphisms for $\Pl{X}$. Notice that morphisms in $\mathrm{Rel}$ are invertible, if and only if they describe bijective maps between the sets. If we denote the set $X$ when regarded as its own dual by $X^{\vee}$, we observe that non-degeneracy is necessary since $r^{\flat}(x,y)=(\gamma_{x}^{-1}(y), \rho_{y}^{-1}(x))$ and $(r^{-1})^{\flat}(x,y)=(\rho_{x}^{-1}(y), \gamma_{y}^{-1}(x)) $.

Hence, with reference to Section \ref{SBasics}, we have a strict monoidal functor $\omega:\tilde{\ct{B}}\rightarrow \mathrm{Rel}$, which sends $\mathbf{x}$ to $X$, $\mathbf{y}$ to $X^{\vee}$ and $\kappa_{\mathbf{x},\mathbf{x}}$ to $r$. The image of $\tilde{\ct{B}}$ forms a rigid braided monoidal subcategory of $\mathrm{Rel}$. Notice that included in this subcategory are morphisms $\omega(\kappa_{\mathbf{x},\mathbf{y}})$, $\omega(\kappa_{\mathbf{y},\mathbf{y}})$ and $\omega(\kappa_{\mathbf{y},\mathbf{y}})$ which define the braidings between $X$ and $X^{\vee}$, $X^{\vee}$ and $X$ and $X^{\vee}$ with itself, respectively. Using the laws of a braided category e.g. $\omega(\kappa_{\mathbf{x},\mathbf{y}}) = \omega\big(( \id_{\mathbf{x}\tn\mathbf{y}}\tn \ev)( \id_{\mathbf{y}}\tn\kappa^{-1}_{\mathbf{x}\tn\mathbf{x}}\tn \id_{\mathbf{y}})( \cv\tn \id_{\mathbf{x}\tn\mathbf{y}})\big)$, we can calculate these morphisms directly:
\begin{align*}
\omega(\kappa_{\mathbf{x},\mathbf{y}}):X\times X^{\vee}\rightarrow X^{\vee}\times X;& \quad (x,y)\longmapsto (\rho_{x}^{-1}(y), \gamma^{-1}_{y}(x))
\\\omega(\kappa_{\mathbf{y},\mathbf{y}}) :X ^{\vee}\times X\rightarrow X\times X^{\vee} ;& \quad (x,y)\longmapsto (\sigma_{x}^{-1}(y), \tau^{-1}_{y}(x))
\\\omega(\kappa_{\mathbf{y},\mathbf{y}}) :X^{\vee}\times X^{\vee}\rightarrow X^{\vee}\times X^{\vee} ;& \quad (x,y)\longmapsto (\rho_{x}(y), \tau_{y}(x))
\end{align*}
Let $\und{w}$ denote a finite sequence of values from $\st{-,\vee}$, so that we can denote the object $X\times X^{\vee}\times X ^{\vee} $ by $X^{\und{w}}$ for the sequence $\und{w}= (-,\vee,\vee)$. We denote the set of such sequences by $\ct{W}$ and define the inverse of a sequence $\und{w}=(w_{1},\ldots w_{n})$ by the sequence $\und{w}^{-1}=(w_{n}^{-1},\ldots w_{1}^{-1}) $, where $-^{-1}=\vee$ and $\vee^{-1}=-$. Hence, by the braiding principle of $\kappa_{\mathbf{a},\mathbf{b}\tn \mathbf{c} }= (\id_{\mathbf{b}}\tn \kappa_{\mathbf{a}, \mathbf{c}})(\kappa_{\mathbf{a}, \mathbf{b}}\tn\id_{\mathbf{c}}) $, we can calculate the induced braidings between any pair of objects $X^{\und{v}} $ and $X^{\und{w}}$, in the image of $\omega$. We denote this braiding by $r_{\und{v},\und{w}}: X^{\und{v}}\times X^{\und{w}} \rightarrow X^{\und{w}} \times X^{\und{v}} $ and extend our notation for $r$, so that for a pair of words $\und{x}\in X^{\und{v}} $ and $\und{y}\in X^{\und{w}}$, we write $r_{\und{v},\und{w}} (\und{x},\und{y})= (\sigma_{\und{x}}(\und{y}), \gamma_{\und{y}}(\und{x}))$ and similarly for $r^{-1}$.  

Now we apply the power-set functor to the above constructions and denote the strong monoidal functor $\Pl{\omega}: \tilde{\ct{B}}\rightarrow \SL$, by $\omega$. As mentioned in Section \ref{SSL}, $\SL$ is cocomplete and we can construct the corresponding coend, \ref{Ecoend}, for FRT reconstruction on $\omega$. 

Observe that 
\begin{align*}
H_{\omega}&=\int^{\mathbf{a}\in \tilde{\ct{B}}}	\omega(\mathbf{a})\tn\omega( \mathbf{a})^{\vee}	= \int^{\und{w}\in \ct{W}} \Pl{X^{\und{w}}\times X^{\und{w}^{-1}}} 
\\&= \coprod_{\und{w}\in \ct{W}} \Pl{X^{\und{w}}\times X^{\und{w}^{-1}}}\big/ \st{\text{Relations}}
 \end{align*}
where we are using the fact that the braidings on $X^{\vee\vee}$ agree with $X$ and $X^{\und{w}^{-1}}= (X^{\und{w}})^{\vee}$. By the symmetric structure of the category we can reorganise the elements of $X^{\und{w}}\times X^{\und{w}^{-1}}= X^{(w_{1},\ldots, w_{n},w_{n}^{-1},\ldots,w_{1}^{-1})} $ into $X^{(w_{1},w_{1}^{-1},\ldots, w_{n},w_{n}^{-1})}$, as long as we recall this change when deriving the mentioned relations and Hopf algebra structure. In this way $H_{\omega}$ along with its induced multiplication takes a simpler form since
\begin{equation*}
	\coprod_{\und{w}\in \ct{W}}\ct{P}\left(X^{(w_{1},w_{1}^{-1},\ldots, w_{n},w_{n}^{-1})}\right)
=\ct{P}\big( F(X\times X^{\vee}\sqcup  X^{\vee}\times X )\big) \end{equation*} 
where $F(S) $, for a set $S$, denotes the free monoid on the set $S$. If we look carefully at the induced multiplication on $H_{\omega} $, we can see that it must agree with the induced multiplication of the free monoid, hence this form of the coend is more desirable to work with. For the set $S= X\times X^{\vee}\sqcup  X^{\vee}\times X $, we will denote, elements of $X\times X^{\vee} $ and $X^{\vee}\times X $ by $(x,y)_{1}$ and $(x,y)_{2}$, respectively.

The category $\tilde{\ct{B}}$ is generated by two types of morphisms, namely braidings $\kappa$ and the duality morphisms $\ev,\cv$. Hence, there are two types of relations which must be quotiented out from $\Pl{F(S)}$ to obtain $H_{\omega}$. 

First we resolve the relations coming from the braidings. Let $F_{1}$, be the quotient of the monoid $F(S)$ as above, by the two-sided ideal generated by the following relations
\begin{align*}
\kappa_{\mathbf{x},\mathbf{x}};& \quad (x,y)_{1}.(a,b)_{1}= (\sigma_{x}(a), \sigma_{y}(b) )_{1}.(\gamma_{a}(x), \gamma_{b}(y))_{1}
\\\kappa_{\mathbf{x},\mathbf{y}};& \quad (x,y)_{1}.(a,b)_{2}= (\rho_{x}^{-1}(a), \rho_{y}^{-1}(b))_{2} .(\gamma^{-1}_{a}(x), \gamma^{-1}_{b}(y))_{1}
\\ \kappa_{\mathbf{y},\mathbf{x}};& \quad (x,y)_{2}.(a,b)_{1}= (\sigma_{x}^{-1}(a), \sigma_{y}^{-1}(b))_{1}.( \tau^{-1}_{a}(x), \tau^{-1}_{b}(y))_{2}
\\ \kappa_{\mathbf{y},\mathbf{y}};& \quad (x,y)_{2}.(a,b)_{2}= (\rho_{x}(a), \rho_{y}(b))_{2}.(\tau_{a}(x), \tau_{b}(y))_{2}
\end{align*}
for all $x,y,a,b\in X$. Now we must resolve the relations which arise from the evaluation and coevaluation morphisms. This is the step which would not be possible if we intended to construct this coend in $\mathrm{Rel}$. We apply a further adjustment to $F_{1}$. Consider the two sided ideal generated by elements 
\begin{align}\label{Edeltdelmnts}
(x,a)_{1}.(y,a)_{2}, \quad (a,x)_{2}.(a,y)_{1} \end{align}
for all $x,y,a\in X$, such that $x\neq y$, and denote it by $\ct{J}$. Let $F_{2}=F_{1}\setminus \ct{J}$ and observe that it no longer has a monoid structure since, the multiplication of certain pairs of elements is not defined, while $\Pl{F_{2}}$ continues to carry an algebra structure in $\SL$, where the pairs whose multiplication is undefined multiply to $\emptyset$. 

The above relations come from the fact that the evaluation morphism, \ref{Eev}, sending non-equal pairs to $\emptyset$. The lattice $H_{\omega}$, will be the quotient of $\Pl{F_{1}}$ by the following relations
\begin{align}\label{EImportnt}
 \st{f.(x,a)_{1}.(x,a)_{2}.h\mid a\in X}=\st{f.h}= \st{f.(a,x)_{2}.(a,x)_{1}.h\mid a\in X} \end{align}
for all $x\in X$ and $f,h\in F_{2}$. Hence, for any word $\und{w}\in \ct{W}$, we have morphisms $\mu_{\und{w}}: \Pl{X^{\und{w}}\times X^{\und{w}^{-1}} } \rightarrow H_{\omega}$ defined by the composition of the natural inclusions $\varrho_{\und{w}}: \Pl{X^{\und{w}}\times X^{\und{w}^{-1}} }\rightarrow \Pl{F(S)} $ and the induced projection $\varsigma: \Pl{F(S)} \rightarrow H_{\omega}$. It is straightforward to see from the relations imposed that $H_{\omega}$ along with morphisms $\mu_{\und{w}}$, becomes the coend of the mentioned diagram.  

Observe that in the last step we are constructing a quotient of the lattice structure, as formulated in Section \ref{SSL}, the relation automatically implies that for any pair $x,a\in X$ and $y\in F_{2}$, $\st{(a,x)_{2}.(a,x)_{1}.y}\leq \st{y}$ in $H_{\omega}$. Hence, notice that this lattice is far from admitting a basis since we have infinitely ordered chains where the order is strict e.g for any pair $x,a\in X$ and $y\in F_{2}$, we have a chain
\begin{equation*}
	\dots<\st{(a,x)_{2}.(a,x)_{1}.(a,x)_{2}.(a,x)_{1}.y}<\st{(a,x)_{2}.(a,x)_{1}.y}< \st{y}
\end{equation*}
Hence, the image of no element in $H_{\omega} $ is minimal i.e for every element $\emptyset\neq h\in H_{\omega}$, there exists an element $\emptyset\neq g\in H_{\omega}$, such that $g< h $ and $g\neq h$. 

Now we can describe the induced Hopf algebra structure on $H_{\omega}$, by Theorem \ref{TFRTmain}. As mentioned earlier the multiplication, is exactly the image of the multiplication for $\Pl{F_{2}}$, where certain pairs of elements multiply to give $\emptyset$ due to the reduction in the structure of $F_{2}$. Additionally, the image of the unit of $F(S)$, denoted by $\st{1}$, acts as the unit of $H_{\omega}$. 

Let us denote arbitrary elements $\st{(x_{1},y_{1})_{i_{1}}.\ldots. (x_{n},y_{n})_{i_{n}}}\in H_{\omega}$ by $(\und{x},\und{y})_{\und{i}}$, where $x=(x_{1},\ldots x_{n})$, $y=(y_{1},\ldots y_{n})$ and $\und{i}=(i_{1},\ldots i_{n}) $. The counit in this case is defined by $\epsilon ((\und{x},\und{y})_{\und{i}} )=1 $ if and only if $x_{i_{j}}=y_{i _{j}}$ for all $1\leq j\leq n$. Notice that $\epsilon $ is well-defined since it is invariant under the imposed relations on $F(S)$, which define $H_{\omega}$. The coalgebra structure is defined by 
\begin{equation*}
\Delta ((\und{x},\und{y})_{\und{i}} )=\vee\st{\big((\und{x},\und{l} )_{\und{i}},(\und{l},\und{y})_{\und{i}} \big)\mid \forall\ l\in X^{n}} 	
\end{equation*} 
Moreover, $H_{\omega}$ admits an involutive antipode defined by $S((\und{x},\und{y})_{\und{i}})= (\und{y}^{f},\und{x}^{f})_{\und{i}^{-1}}$, where $\und{x}^{f}$ denotes the sequence $\und{x}$ being flipped i.e. $\und{x}^{f}= (x_{n},\ldots, x_{1})$ and $\und{i}^{-1}$ denotes the sequence being flipped as well as $1$ and $2$ being switched e.g. $(1,1,2)^{-1}= (1,2,2)$. 

As described in Section \ref{SBasics}, $H_{\omega}$ will have an induced co-quasitriangular structure, $\Rr: H_{\omega}\tn H_{\omega}\rightarrow \Pl{1}$ defined by 
\begin{equation}
\Rr ( (\und{x},\und{y})_{\und{i}}, (\und{a},\und{b})_{\und{j}})= 1 \text{ iff } \und{y}=\sigma_{\und{x}}(\und{a})\text{ and } \und{b}=\gamma_{\und{a}}(\und{x}) 	
\end{equation}
where $\sigma,\gamma$ denote the extensions of the braiding to arbitrary $X^{\und{w}}$ and $X^{\und{v}}$.     
\begin{thm}\label{TFRTRem} Given a set-theoretical YBE solution $(X,r)$, the remnant of the Hopf algebra $H_{\omega}$ recovers the universal group $G(X,r)$ of the solution along with its braiding operator.\end{thm}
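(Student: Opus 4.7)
The plan is to exhibit a group isomorphism $\phi\colon G(X,r)\to R(H_\omega)$ intertwining the braiding operators, defined on generators by $\phi(\bar{x}) = [x] := \overline{(x,x)_1}$; the argument then breaks into four steps.

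First, I verify that $\phi$ is a well-defined group morphism. Since $\epsilon((x,x)_1)=1$, the element $[x]$ is non-zero in $\ct{Q}$. Applying $\pi\otimes\pi$ to $\Delta((x,x)_1) = \vee_{l\in X}((x,l)_1,(l,x)_1)$ kills every summand with $l\neq x$ (such pairs contain a counit-zero factor, hence lie in $\epsilon^{-1}(\emptyset)$ and are sent to $\emptyset$), leaving $(\pi\otimes\pi)\Delta((x,x)_1) = ([x],[x])$; so $[x]$ is group-like and lies in $R(H_\omega)$ by Lemma \ref{LHpfGrp}. The coend relation \ref{EImportnt} specialized to $f=h=1$ reads $\vee_a (x,a)_1(x,a)_2 = 1$; again only the $a=x$ summand survives under $\pi$, giving $[x]^{-1} = \overline{(x,x)_2}$. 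Specializing the $\kappa_{\mathbf{x},\mathbf{x}}$-relation to diagonal inputs ($y=x$, $b=a$) produces $(x,x)_1(a,a)_1 = (\sigma_x(a),\sigma_x(a))_1(\gamma_a(x),\gamma_a(x))_1$ in $H_\omega$, which descends to $[x][a]=[\sigma_x(a)][\gamma_a(x)]$ in $R(H_\omega)$ — precisely the defining relation of $G(X,r)$. Thus $\phi$ extends to a well-defined group homomorphism.

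Second, for compatibility with braiding operators I substitute directly into \ref{EBrdCQHA}. The triple coproduct of $(x,x)_1$, projected through $\pi\otimes\pi\otimes\pi$, is again concentrated on diagonal pieces; and $\Rr((a,b)_1,(c,d)_1)$ equals $1$ precisely when $b=\sigma_a(c)$ and $d=\gamma_c(a)$. Tracing this through pins down a unique non-vanishing term and gives $r_\omega([x],[y]) = ([\sigma_x(y)],[\gamma_y(x)])$, which agrees with $\ov{r}(\bar{x},\bar{y}) = (\overline{\sigma_x(y)},\overline{\gamma_y(x)})$ on $G(X,r)$. Third, surjectivity is syntactic: by the bialgebra condition on $\epsilon$, any non-diagonal monomial $(x_1,y_1)_{i_1}\cdots(x_n,y_n)_{i_n}$ (some $x_k\neq y_k$) lies in $\epsilon^{-1}(\emptyset)$ and is killed by $\pi$. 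So every group-like element of $\ct{Q}$ is represented by a single diagonal monomial $(x_1,x_1)_{i_1}\cdots(x_n,x_n)_{i_n}$, whose image equals $[x_1]^{\epsilon_1}\cdots[x_n]^{\epsilon_n}$ with $\epsilon_j=+1$ if $i_j=1$ and $-1$ if $i_j=2$; this plainly lies in the image of $\phi$.

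For injectivity I construct a section $\psi\colon R(H_\omega)\to G(X,r)$ by $[x]\mapsto\bar{x}$, which reduces to verifying that every relation among the generators $\{[x]\}$ in $R(H_\omega)$ is already a consequence of the presentation of $G(X,r)$. Beyond the $\kappa_{\mathbf{x},\mathbf{x}}$-relation and the inverse $[x]^{-1}=\overline{(x,x)_2}$, the remaining rules $\kappa_{\mathbf{x},\mathbf{y}}$, $\kappa_{\mathbf{y},\mathbf{x}}$, $\kappa_{\mathbf{y},\mathbf{y}}$ specialized to diagonal inputs yield identities of the form $[x][a]^{-1}=[u]^{-1}[v]$ or its mirror, where $u,v$ are determined by $r^{\pm 1}$. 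For $\kappa_{\mathbf{x},\mathbf{y}}$ one has $u = \rho_x^{-1}(a)$, $v = \gamma_a^{-1}(x)$, and unpacking the definitions shows $r(v,a) = (u,x)$; the defining relation of $G(X,r)$ then gives $\bar{v}\bar{a} = \bar{u}\bar{x}$, which rearranges to the desired equality. The $\kappa_{\mathbf{y},\mathbf{x}}$ and $\kappa_{\mathbf{y},\mathbf{y}}$ cases are handled symmetrically, each reducing via a single application of the defining relation (and bijectivity of $\sigma_x,\gamma_y,\tau_x,\rho_y$) to a consequence of the $G(X,r)$-presentation. Hence $\psi\circ\phi$ and $\phi\circ\psi$ act as the identity on generators, so $\phi$ is the required isomorphism. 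The main obstacle is this last step: the bookkeeping of showing that each of the four specialized braiding relations, as well as the coend relations, collapses to an identity already derivable in $G(X,r)$.
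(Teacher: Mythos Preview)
Your proof is correct and follows essentially the same approach as the paper's: both arguments identify the remnant by observing that only diagonal monomials $(\und{x},\und{x})_{\und{i}}$ survive the $\epsilon$-quotient, then verify that the four $\kappa$-relations and the coend relation \ref{EImportnt}, specialized to diagonals, collapse precisely to the defining presentation of $G(X,r)$, and finally compute the induced braiding via \ref{EBrdCQHA}. The only difference is packaging---you build an explicit pair of inverse homomorphisms $\phi,\psi$ where the paper directly identifies the quotient lattice $\ct{Q}$ with $\Pl{G(X,r)}$---but the substantive verifications (in particular that the $\kappa_{\mathbf{x},\mathbf{y}}$, $\kappa_{\mathbf{y},\mathbf{x}}$, $\kappa_{\mathbf{y},\mathbf{y}}$ relations follow from the first one plus inverses, which the paper simply asserts as ``straightforward'' and you partially work out) are identical.
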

\begin{proof} We must form the quotient $\ct{Q}$ of $H_{\omega}$, as done in Lemma \ref{LQuotHpf}. First observe that by the definition of the counit $\epsilon$, we can write $\ct{Q}$ as a quotient of $\Pl{F(X\sqcup X^{\vee})}$, where we view $(\und{x},\und{y})_{\und{i}}$ with $\und{x}=\und{y}$ as the word $\und{x}_{\und{i}}$ in $F(X\sqcup X^{\vee})$. Notice that thereby the deleted elements \ref{Edeltdelmnts}, are ineffective and would be sent to $\emptyset$ anyway in $\ct{Q}$. Furthermore, $\ct{Q}$ should be written as a quotient of $\Pl{F(X\sqcup X^{\vee})/\ct{I}} $, where $\ct{I}$ is the two sided ideal generated by the set of relations
\begin{align*}
	x.a= \sigma_{x}(a).\gamma_{a}(x),
\hspace{0.6cm}& \hspace{0.6cm} x.b= \rho_{x}^{-1}(b). \gamma^{-1}_{b}(x)
\\ b.x= \sigma_{b}^{-1}(x).\tau^{-1}_{x}(b),
\hspace{0.6cm}& \hspace{0.6cm} y.b=  \rho_{y}(b). \tau_{b}(y)
\end{align*}
where $x,a\in X$ and $b,y\in X^{\vee}$. Lastly, if $D=\vee\epsilon^{-1}(\emptyset)$, we note that by relation \ref{EImportnt}, for any $x\in X$ and $f,h\in F_{2}$, we have
\begin{equation}
 \st{f.(x,x)_{1}.(x,x)_{2}.h}\vee D=\st{f.h}\vee D= \st{f.(x,x)_{2}.(x,x)_{1}.h}\vee D \end{equation}
Hence, the remaining relations imposed on $\ct{Q}$, will be that $\st{x_{1}}$ and $\st{x_{2}}$, are multiplicative inverses for any $x\in X$. In other words, $\ct{Q}$ is isomorphic to $\Pl{G}$, where $G$ is the group obtained by quotienting the \emph{free group} generated by $X$, $F_{g}(X)$, by the mentioned braiding relations, where elements $x_{2}\in X^{\vee}$ are now written as $x^{-1}$. It is straightforward to see that the latter three braiding relations then follow from the first namely $ x.a= \sigma_{x}(a).\gamma_{a}(x) $ for $a,x\in X$, and the inverse laws. Hence, $R(H_{\omega})\cong G(X,r)$ as groups.  

It remains to show that the induced braiding on $R(H_{\omega})$ agrees with that of the universal group of $(X,r)$. Let $\und{g},\und{h}\in G(X,r) $, then we recall the structure of the induced braiding on $R(H)$ from Theorem \ref{TRemSkwB} and observe that 
\begin{align*}
r(\und{g}, \und{h})=& \Rr ( (\und{g}, \und{g})_{(1)} , (\und{h}, \und{h})_{(1)}). \big(\pi((\und{g}, \und{g})_{(2)}),\pi((\und{h}, \und{h})_{(2)})\big) 
\\&.\Rr^{-1} ( (\und{g}, \und{g})_{(3)} , (\und{h}, \und{h})_{(3)})
\\=& \vee_{\und{l},\und{k},\und{m},\und{n}} \Rr ( (\und{g}, \und{l}) , (\und{h}, \und{m})). \big(\pi((\und{l}, \und{k})),\pi((\und{m}, \und{n}))\big) .\Rr^{-1} ( (\und{k}, \und{g})_{(3)} , (\und{n}, \und{h}))
\\ =& \big( \pi((\sigma_{\und{g}}(\und{h}), \sigma_{\und{g}}(\und{h})),\pi((\gamma_{\und{h}}(\und{g}), \gamma_{\und{h}}(\und{g}))\big)= \big(\sigma_{\und{g}}(\und{h}) , \gamma_{\und{h}}(\und{g})\big)  
\end{align*} 
where if $\und{g}\in X^{\und{w}}$, then $\und{l},\und{k}$ are take values in all elements of $X^{\und{w}}$ and similarly for $\und{m},\und{n}$.  
\end{proof}
In the proof of Theorem \ref{TRemSkwB}, we used the structural properties of the remnant, to deduce the projection of the transmuted product on the remnant. In the case of $H_{\omega}$, one can directly compute the multiplication and antipode of the transmutation of $H_{\omega}$ by \ref{ETrsm} and \ref{ETrsmS}. We present these structures for the interested reader and omit their verification:
\begin{align*}
(\und{x}.\und{y})_{\und{i}}	\star (\und{a}.\und{b})_{\und{j}} &= \left(\und{x}.\sigma^{-1}_{\und{y}}(a), \tau^{-1}_{\und{b}}(\tau_{\und{a}}(\und{y})). \rho_{\tau^{-1}_{\und{a}}(y)}(d) \right)_{\und{i}.\und{j}}
\\ S^{\star}( (\und{x}.\und{y})_{\und{i}}) &= \left( \rho_{\und{x}}^{-1}(x)^{f}, \mathfrak{l}(\sigma^{-1}_{\und{y}}(\und{x}))^{f}\right)_{\und{i}^{-1}}
\end{align*}
where $\mathfrak{l}$ is the unique isomorphism induced on the set $X$ and its powers, which we discuss in Appendix \ref{App}. Observe that when $\und{x}=\und{y}$ and $\und{a}=\und{b} $, the multiplication takes exactly the required form of $\star$ on the skew brace.   
\subsection{From Skew Braces to Co-quasitriangular Hopf Algebras}\label{SSkwCQHA}
Let $G$ be a group with a braiding operator $r:G\times G \rightarrow G\times G$. In this section, we construct a co-quasitriangular Hopf alebra in $\SL$, which recovers $(G,r)$ as its remnant. To do this  we apply the FRT construction as in last section, while replacing $\tilde{\ct{B}}$, with $\tilde{\ct{B}} _{m}$, which is the smallest rigid braided monoidal category generated by a commutative monoid. 

We define the category $\tilde{\ct{B}}_{m}$, by adding two new generating morphisms to $\tilde{\ct{B}}$, namely $m:\mathbf{x}\tn \mathbf{x} \rightarrow \mathbf{x} $ and $u: \un\rightarrow \mathbf{x} $, and imposing additional relations 
\begin{align*}
m(u\tn \id_{\mathbf{x}}&)= \id_{\mathbf{x}}  = m(\id_{\mathbf{x}}\tn u), \hspace{1cm}  m( m\tn \id_{\mathbf{x}}) = m(\id_{\mathbf{x}}\tn m) 
\\ m\kappa_{\mathbf{x}, \mathbf{x}} = m&, \hspace{1cm} \kappa_{\mathbf{x}, \mathbf{x}}(u\tn \id_{\mathbf{x}})= \id_{\mathbf{x}}\tn u, \hspace{0.5cm} \kappa_{\mathbf{x}, \mathbf{x}} (\id_{\mathbf{x}}\tn u) = u\tn \id_{\mathbf{x}} \end{align*} 
which make $\mathbf{x} $ a commutative monoid in $\tilde{\ct{B}}_{m}$.

Given a group $G$ with a braiding operator $r$, we can define a functor $\omega_{m}: \tilde{\ct{B}} _{m} \rightarrow \SL$, as before by sending $\mathbf{x} $ to $\Pl{G}$, morphisms $m$, $u$ and $\kappa$ to $\Pl{.}$, $\Pl{u}$ and $r$, respectively. This of course defines a strong monoidal functor and since $\tilde{\ct{B}} _{m}$ is a rigid braided category, the Hopf algebra $H_{\omega_{m}}$ constructed from the functor $\omega_{m}$ will have an induced co-quasitriangular structure.
\begin{thm}\label{TSkwCQHA} Given a group $G$ with a braiding operator $r$, the remnant of the reconstructed co-quasitriangular Hopf alebra, $H_{\omega_{m}}$, described above, recovers $(G,r)$. 
\end{thm}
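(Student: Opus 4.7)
The plan is to mirror the analysis of Theorem \ref{TFRTRem}, treating $H_{\omega_m}$ as a further quotient of the coend $H_\omega$ built in Section \ref{SFRTUnivGrp} (with $X = G$ and $r$ the given braiding operator) by the additional dinaturality relations arising from the monoid generators $m : \mathbf{x}\tn\mathbf{x}\rightarrow \mathbf{x}$ and $u : \un\rightarrow \mathbf{x}$ of $\tilde{\ct{B}}_m$. First I would verify that $\omega_m$ is well-defined as a strict monoidal functor; this is exactly where the braiding operator axioms of Section \ref{SSkwBasic} enter, since the commutativity relation $m\kappa_{\mathbf{x},\mathbf{x}} = m$ in $\tilde{\ct{B}}_m$ becomes $m \circ r = m$ on $\ct{P}(G)$, and the naturality of $\kappa$ with respect to $m$ translates exactly to axioms \ref{EbrdOpr1} and \ref{EbrdOpr2}.

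Next I would describe the additional relations imposed on the presentation of $H_\omega$. The generator $m$ adds dinaturality identifications that force the free-monoid multiplication on letters $(x,y)_1$ to match the image of the group multiplication, effectively identifying a word $(\und{x},\und{y})_{\und{i}}$ with a single pair built from products in $G$, while the generator $u$ collapses the empty word to the pair coming from the group identity. Passing to the remnant via Lemma \ref{LQuotHpf} exactly as in the proof of Theorem \ref{TFRTRem}, the counit is non-trivial precisely on diagonal elements, so $\ct{Q}$ is a quotient of $\ct{P}$ applied to the free monoid on $G \sqcup G^{\vee}$ modulo the braiding relations (from $\kappa$), the inverse relations (from $\ev, \cv$) making $g_1$ and $g_2$ mutually inverse, and the new monoid relations collapsing word concatenation to group multiplication. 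The cumulative effect is that the underlying set reduces to $G$ itself rather than the universal group $G(X,r)$ obtained in Theorem \ref{TFRTRem}, and $\ct{Q} \cong \ct{P}(G)$ as Hopf algebras, yielding $R(H_{\omega_m}) \cong G$.

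Finally, I would compute the induced braiding operator on $R(H_{\omega_m})$ via Theorem \ref{TRemCQHA} and the formula for the induced co-quasitriangular structure in Section \ref{SBasics}. Since $\Rr$ on $H_{\omega_m}$ is built from $\omega_m(\kappa_{\mathbf{x},\mathbf{x}}) = r$, the same calculation at the end of the proof of Theorem \ref{TFRTRem} applies verbatim and returns $r(g,h) = (\sigma_g(h), \gamma_h(g))$. The main obstacle is confirming that the new monoid relations collapse the free-monoid structure of $H_\omega$ down to $G$ without trivialising the braiding or introducing any further identifications in $G$ after projection; this consistency is guaranteed precisely by the braiding operator axioms \ref{EbrdOpr1}, \ref{EbrdOpr2} together with $mr = m$, which are also what make $\omega_m$ a well-defined functor in the first place.
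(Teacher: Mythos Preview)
Your proposal is correct and follows essentially the same route as the paper: both arguments realise $H_{\omega_m}$ as a quotient of $H_\omega$ by the extra dinaturality relations coming from $m$ and $u$, then pass to the remnant exactly as in Theorem \ref{TFRTRem} to see that the word structure collapses to $G$ itself, and finally re-use the braiding computation from that theorem verbatim. One small imprecision worth noting: the dinaturality relation from $m$ in $H_{\omega_m}$ is not literally ``a word equals a single pair'' but rather $\st{(a.b,c)_{1}}=\vee_{d.e=c}\st{(a,d)_{1}.(b,e)_{1}}$; it is only after projecting to the remnant (where the second coordinate is forced equal to the first) that this collapses concatenation to the group product, which is the conclusion you draw.
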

\begin{proof} We must again first construct the coend $H_{\omega_{m}}$. As all the previous morphisms appear, $H_{\omega_{m}}$ will of course be a quotient of $H_{\omega}$, where $\omega :\tilde{B}\rightarrow \SL$ is the relevant functor for the underlying set-theoretical YBE solution $(G,r)$. The additional relations arise from the presence of the morphisms $m$ and $u$ in $\tilde{\ct{B}} _{m}$. The first relation comes from the parallel pair 
$$ \mu_{\mathbf{x}}(\mathbf{e}\tn \id_{\Pl{1}}), \mu_{\un}(\id_{\Pl{1}}\tn \mathbf{e}^{\vee}):\Pl{X\times 1} \rightrightarrows H_{\omega} $$
and imposess $\st{1}= \st{(e,e)_{1}}$. A symmetric relation coming from $\mathbf{e}^{\vee}$ imposes that $\st{1}= \st{(e,e)_{2}}$. The second set of relations arise from the parallel pair 
$$ \mu_{\mathbf{x}}(\Pl{.}\tn \id_{\Pl{X^{\vee}}}), \mu_{\mathbf{x}\tn\mathbf{x}}(\id_{\Pl{X\times X}}\tn \Pl{.}^{\vee}):\Pl{X\times X\times X^{\vee}} \rightrightarrows H_{\omega_{m}} $$ 
Consequently, for any $a,b,c\in G$ and $f,h\in F_{2}$ (where $F_{2}$ is as in the proof of Theorem \ref{TFRTRem}), we have the following relation 
$$\st{f.(a.b,c)_{1}.h}= \st{f.(a,d )_{1}. (b,e)_{1}.h\mid \forall d,e\in G\ \text{satisfying}\ d.e=c} $$	
There is also a symmetric relation, where $(,)_{1} $ is replaced by $(,)_{2}$. From these relation it should be clear that once we construct the remnant of $H_{\omega_{m}}$, the last relation, will imply that the image of any word $\und{x}= x_{1}x_{2}\ldots x_{n}\in G^{n}$ in $F_{g}(G)$ is identified with the multiplication of the sequence in $G$. The additional braiding relations in $H_{\omega_{m}}$, are assumed to commute with the multiplication since $r$ is a braiding operator and hence, do not affect the computation. Consquently, $R(H_{\omega_{m}})\cong G$ and $r$ becomes the induced braiding on $G$, with exactly the same arguments as in Theorem \ref{TFRTRem}. 
\end{proof}
\appendix
\section{Appendix: An Induced Bijection}\label{App}
In this section, we investigate a bijection $\mathfrak{l}: X\rightarrow X$, which is induced for any set-theoretical YBE solution $(X,r)$. We will first define $\mathfrak{l}$ and show it is indeed bijective, by explicit computation, before commenting on its categorical origin at the end of the section.

For this section, we adapt a graphical notation, in the same vain as \cite{LYZ}. If $(a,b)=r(x,y)$ holds, for $x,y,a,b\in X$, we draw 

\adjustbox{scale=0.8,center}{\begin{tikzcd}
x 
& y \arrow[dl, no head] & &a \arrow[dr, no head ]& b \arrow[dl, no head, crossing over] \\ a 
& b \arrow[from=ul, no head , crossing over]& & x&y  \end{tikzcd}}\captionof{figure}{Graphical notation}
If we read these diagrams from up to down, considering the values in the top line as the entries and the bottom line as outputs, it is well-known that the Yang-Baxter equation is equivalent to the following diagrams having the same output:

\adjustbox{scale=0.8,center}{\begin{tikzcd}
x 
& y\arrow[dl,no head] & z \arrow[d,no head] & &x \arrow[d,no head]& y &z \arrow[dl,no head] 
\\ .\arrow[d,no head] & .\arrow[ul, no head, crossing over] &z \arrow[dl,no head] & 
& x&. \arrow[dl,no head] &. \arrow[ul, no head, crossing over]\arrow[d,no head]
\\ .&. \arrow[dl,no head] &c \arrow[ul, no head, crossing over] & 
& a\arrow[d,no head]& . \arrow[ul, no head, crossing over]&. \arrow[dl,no head]
\\ a& b \arrow[ul, no head, crossing over]& c\ar[u,no head]& & a& b&c \arrow[ul, no head, crossing over]\end{tikzcd}}\captionof{figure}{Yang-Baxter Equation}
where a straight line, denotes the identity map on $X$.
 
Since for any $x\in X$, $\gamma_{x}$ is bijective, there exists a unique $a\in X$ such that $\gamma_{x}(a) =x$. We can in fact show that for any $a\in X$, there exists a unique $x\in X$ satisfying $\gamma_{x}(a) =x$:
\begin{lemma}\label{LA31} Let $(X,r)$ be a set-theoretical YBE solution, with notation as before, for any $a\in X$, there exists a unique $x\in X$ such that $\gamma_{x}(a) =x$. 
\end{lemma}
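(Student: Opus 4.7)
The statement is equivalent to the assertion that the map $\mathfrak{l}'\colon X\to X$ defined by $\mathfrak{l}'(x)=\gamma_x^{-1}(x)$ is a bijection; $\mathfrak{l}'$ is well-defined by the non-degeneracy of $r$, and the map $\mathfrak{l}$ we seek is its inverse $(\mathfrak{l}')^{-1}$. My plan is to exhibit an explicit two-sided inverse for $\mathfrak{l}'$, relying on $r^{-1}$ and the YBE.

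First I reformulate the defining condition in two dual ways. The equality $\gamma_x(a)=x$ is equivalent to $r(a,x)=(\sigma_a(x),x)$; applying $r^{-1}$ then yields the pair of equalities $\rho_x(\sigma_a(x))=x$ and $\tau_{\sigma_a(x)}(x)=a$. Since the non-degeneracy of $r$ together with its bijectivity carries over to $r^{-1}$, the map $\mathfrak{l}''\colon X\to X$ given by $\mathfrak{l}''(x)=\rho_x^{-1}(x)$ is likewise a well-defined function, and the first equality becomes $\sigma_a(x)=\mathfrak{l}''(x)$. Thus $\gamma_x(a)=x$ is equivalent to the pair of identities $\sigma_a(x)=\mathfrak{l}''(x)$ and $a=\tau_{\mathfrak{l}''(x)}(x)$, each providing a relation between $a$ and $x$ that one may try to invert.

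For uniqueness of $x$ given $a$, suppose $\gamma_{x_i}(a)=x_i$ for $i=1,2$. I apply the YBE $(r\tn\id)(\id\tn r)(r\tn\id)=(\id\tn r)(r\tn\id)(\id\tn r)$ to the triple $(a,x_1,x_2)$: on the side beginning with $r\tn\id$, the hypothesis $\gamma_{x_1}(a)=x_1$ immediately collapses the first crossing so the second coordinate becomes $x_1$, and all three outputs of the composite can be read off cleanly. Comparing with the opposite side, and with the analogous calculation for $(a,x_2,x_1)$ (where $\gamma_{x_2}(a)=x_2$ simplifies the corresponding crossing), produces a chain of identities among $x_1, x_2$ and their $\sigma$- and $\gamma$-images. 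Cancelling by the bijectivity of the various $\sigma, \gamma$ then forces $x_1=x_2$.

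For existence, I construct $x$ graphically. The idea is to close up the right-hand strand in the diagram of $r$ with an $\ev$-$\cv$ pair, producing the partial trace $X\to X$ in $\SL$ whose value at $a$ is exactly the relation $\{\sigma_a(x)\mid \gamma_x(a)=x\}$. Using the YBE to slide braidings past the cap and cup, together with the non-degeneracy of $r^{-1}$ to collapse the closed loop, I expect to show that this partial trace is in fact single-valued, producing the sought $x$. Equivalently, I expect to write down an explicit candidate $\mathfrak{l}(a)$ built out of iterates of $\mathfrak{l}''$ and $\tau$ (e.g.\ of the shape $\tau_{\mathfrak{l}''(c)}(a)$ for some iterate $c$), and then verify by a graphical YBE argument that $\mathfrak{l}$ is two-sided inverse to $\mathfrak{l}'$. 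The main obstacle is precisely this existence step: unlike uniqueness, which falls out of a single YBE application plus the non-degeneracies, producing $x$ explicitly requires the YBE and the duality morphisms to work in tandem to promote the partial trace from a mere relation in $\mathrm{Rel}$ to an honest bijection of sets.
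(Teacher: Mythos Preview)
Your reformulation of the statement as the bijectivity of $\mathfrak{l}'(x)=\gamma_x^{-1}(x)$ is correct, and the dual description via $r^{-1}$ is a useful observation. However, the proposal contains genuine gaps in both halves.

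For existence you do not actually produce $x$: the partial-trace idea and the phrase ``I expect to write down an explicit candidate'' are not a proof, and you correctly flag this as the main obstacle. The paper, by contrast, gives a direct construction. Choose $p\in X$ with $\sigma_a(p)=a$ and $y\in X$ with $\rho_a(y)=a$; applying the YBE to the triple $(\tau_y(a),a,p)$ and reading off the third output shows that $x:=\gamma_p(a)$ satisfies $\gamma_x(a)=x$. No partial traces or lattice machinery are needed; a single well-chosen YBE diagram suffices.

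For uniqueness your claim that one YBE on $(a,x_1,x_2)$ ``forces $x_1=x_2$ after cancelling bijections'' is not substantiated. If you actually compute both sides, the third output gives only that $m:=\gamma_{x_2}(x_1)$ satisfies $\gamma_m(c)=m$ for a certain $c$ built from $a$; this does not immediately yield $x_1=x_2$. The paper's argument is more intricate: it introduces an auxiliary $l$ with $\sigma_{x_1}(l)=x_2$, applies the YBE to $(a,x_1,l)$ to manufacture a new fixed-point relation $\gamma_m(x_2)=m$, then chooses $j$ with $\sigma_m(j)=m$ and applies the YBE a second time to $(x_1,l,j)$, finally deducing $x_1=x_2$ from an injectivity of $\tau$. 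Two carefully chosen YBE applications with auxiliary elements are required; a single application to $(a,x_1,x_2)$ does not close the argument.
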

\begin{proof} We first prove the existence of such an $x\in X$. Let $a\in X$, and pick $p,y\in X$, such that $\sigma_{a}(p)=a$ and $\rho_{a}(y)=a$, and observe that for some $l\in X$, the following diagrams 

\adjustbox{scale=0.8,center}{\begin{tikzcd}
\tau_{y}(a) & a\arrow[dl,no head] & p \arrow[d,no head] & 
& \tau_{y}(a) \arrow[d,no head]& a &p \arrow[dl,no head] 
\\ y\arrow[d,no head] & a\arrow[ul, no head, crossing over] &p \arrow[dl,no head] & 
& \tau_{y}(a)& a \arrow[dl,no head] &\gamma_{p}(a) \arrow[ul, no head, crossing over]\arrow[d,no head]
\\ y & a \arrow[dl,no head] & \gamma_{p}(a) \arrow[ul, no head, crossing over] & 
& y\arrow[d,no head]& a \arrow[ul, no head, crossing over]& \gamma_{p}(a) \arrow[dl,no head]
\\ ?& l \arrow[ul, no head, crossing over]& \gamma_{p}(a)\ar[u,no head]& 
& y& l&?? \arrow[ul, no head, crossing over]\end{tikzcd}}\captionof{figure}{Proof of existance}
hold. Hence, $?=y$ and $??= \gamma_{p}(a) $, so that $\gamma_{\gamma_{p}(a) }(a)= \gamma_{p}(a) $.

Now, we assume there exists a $b\in X$ such that $\gamma_{x}(b)=x $ and $\gamma_{y}(b)=y$ for $x\neq y$. Hence, since, $\sigma_{x}$ is bijective, pick $l$ so that $\sigma_{x}(l)=y$ and observe that for some $p,q,m,t\in X$, the following diagrams  

\adjustbox{scale=0.8,center}{\begin{tikzcd}
b & x\arrow[dl,no head] & l \arrow[d,no head] & &b \arrow[d,no head]& x &l \arrow[dl,no head] 
\\ p\arrow[d,no head] & x\arrow[ul, no head, crossing over] &l \arrow[dl,no head] & 
& b&y \arrow[dl,no head] &m \arrow[ul, no head, crossing over]\arrow[d,no head]
\\ p & y \arrow[dl,no head] &m \arrow[ul, no head, crossing over] & 
& q\arrow[d,no head]& y \arrow[ul, no head, crossing over]&m \arrow[dl,no head]
\\ q& t \arrow[ul, no head, crossing over]& m\ar[u,no head]& 
& q& t&? \arrow[ul, no head, crossing over]\end{tikzcd}}\captionof{figure}{Proof of uniqueness }
hold. Hence, $?=m $ and we have that $\gamma_{m}(y)=m$. Now we pick $j\in X$ such that $\sigma_{m}(j)=m$ and observe that for some $t\in X$, the figures

\adjustbox{scale=0.8,center}{\begin{tikzcd}
x & l\arrow[dl,no head] & j \arrow[d,no head] & &x \arrow[d,no head]& l &j \arrow[dl,no head] 
\\ y\arrow[d,no head] & m\arrow[ul, no head, crossing over] &j \arrow[dl,no head] & 
& x&? \arrow[dl,no head] &j \arrow[ul, no head, crossing over]\arrow[d,no head]
\\ y& m \arrow[dl,no head] & \gamma_{j}(m) \arrow[ul, no head, crossing over] & 
& t\arrow[d,no head]&m  \arrow[ul, no head, crossing over]&j \arrow[dl,no head]
\\ t& m \arrow[ul, no head, crossing over]& \gamma_{j}(m)\ar[u,no head]& 
& t& m&\gamma_{j}(m) \arrow[ul, no head, crossing over]\end{tikzcd}} \captionof{figure}{Proof of uniqueness }
hold. Hence $?=m$ and $y=\tau_{t}(m)=x $.
\end{proof}

In the situation of Lemma \ref{LA31}, if $\gamma_{x}(a) =x$, we will denote $\sigma_{a}(x)$ by $\mathfrak{l}(a)$. Observe that a completely symmetric argument shows that for any $a\in X$, there exists a unique $y\in X $ such that $\tau_{y}(a)=y$. In this case, we denote $\rho_{a}(y)$ by $\mathfrak{r}(a)$. From Figure 3, we can say that for any $a\in X$, there exist unique elements $x,y,l\in X$ such that $r(a,x)=(l,x)$ and $r (y,a)=(y,l)$. Hence, $\mathfrak{l}(a)=l$ and $\mathfrak{r}(l)=a$, making the maps $\mathfrak{l}$ and $\mathfrak{r}$ inverses, and consequently, bijections. In our diagrammatic notation, we have that the following diagrams hold.

\adjustbox{scale=0.8,center}{\begin{tikzcd}
a & x \arrow[dl, no head] & &y \arrow[dr, no head ]& \mathfrak{l}(a) \arrow[dl, no head, crossing over] 
\\ \mathfrak{l}(a) & x \arrow[from=ul, no head , crossing over]& & y& a \end{tikzcd}}
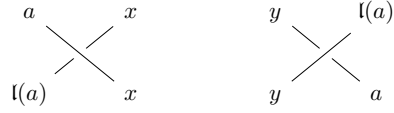
\captionof{figure}{Unique pair $a, \mathfrak{l}(a)$}

In fact, in the proof of Lemma \ref{LA31}, we have explicitly given $x,y,\mathfrak{l}(a)$ in terms of $a\in X$. In the paragraph before Figure 3, we chose $x=\tau_{a}^{-1}(a)$, $y=\rho_{a}^{-1}(a) $ and from Figure 3, we observe that $\mathfrak{l}(a)= \sigma_{a}(\tau_{a}^{-1}(a))= \gamma_{a}(\rho_{a}^{-1}(a))$.

As mentioned throughout this work, a set-theoretical YBE solution $(X,r)$ provides a functor $\omega:\tilde{\ct{B}}\rightarrow \mathrm{Rel}$. From this statement, it follows that the morphisms $\cv':=\omega(\kappa^{-1}_{\mathbf{y}, \mathbf{x}})\cv:\un \rightarrow X\times X^{\vee}$ and $\ev':=\ev \omega (\kappa_{\mathbf{y},\mathbf{x}}):X^{\vee}\times X\rightarrow 1$, must also satisfy the duality axioms. When written explicitly, the two morphisms, take the forms
\begin{align*}
\ev'= &\st{((x,\mathfrak{l}(a));1)\mid \forall x\in X}\subset X\times X\times \un   
\\ \cv'=& \st{(1;(x,\mathfrak{r}(x))\mid \forall x\in X}\subset \un\times X\times X  
\end{align*}
which as we have demonstrated above, are well-defined and satisfy the duality axioms.

\bibliographystyle{plain}

\end{document}